\documentclass[a4paper,reqno,10pt]{amsart}

\usepackage{psfrag}

\usepackage{setspace}
\usepackage[totalwidth=14cm,totalheight=22cm]{geometry}
\usepackage[T1]{fontenc}
\usepackage[dvips]{graphicx}
\usepackage{epsfig}
\usepackage{amsmath,amssymb,amscd,amsthm}
\usepackage{amsfonts}
\usepackage{amssymb}
\usepackage{subfigure}
\usepackage[latin1]{inputenc}
\usepackage{latexsym}
\usepackage{graphics}
\usepackage{colortbl}
\usepackage{color}
\usepackage{ae}
\usepackage{enumitem}
\usepackage[all]{xy}
\usepackage{cancel}
\usepackage{bbm}

\usepackage{relsize}

\newcommand{\red}[1]{}

\makeatletter
\newtheorem*{rep@theorem}{\rep@title}
\newcommand{\newreptheorem}[2]{%
\newenvironment{rep#1}[1]{%
 \def\rep@title{#2 \ref{##1}}%
 \begin{rep@theorem}}%
 {\end{rep@theorem}}}
\makeatother

\makeatletter
\newtheorem*{rep@cor}{\rep@title}
\newcommand{\newrepcor}[2]{%
\newenvironment{rep#1}[1]{%
 \def\rep@title{#2 \ref{##1}}%
 \begin{rep@cor}}%
 {\end{rep@cor}}}
\makeatother

\makeatletter
\newtheorem*{rep@prop}{\rep@title}
\newcommand{\newrepprop}[2]{%
\newenvironment{rep#1}[1]{%
 \def\rep@title{#2 \ref{##1}}%
 \begin{rep@prop}}%
 {\end{rep@prop}}}
\makeatother

\newtheorem{cor}{Corollary}[section]

\newtheorem{prop}[cor]{Proposition}

\newrepcor{cor}{Corollary}
\newreptheorem{theorem}{Theorem}
\newrepprop{prop}{Proposition}

\newtheorem*{theorem*}{Theorem}
\newtheorem{lemma}[cor]{Lemma}

\theoremstyle{definition}
\newtheorem{defi}[cor]{Definition}
\theoremstyle{remark}
\newtheorem{remark}[cor]{Remark}
\newtheorem*{remark*}{Remark}
\newtheorem{example}[cor]{Example}

\newcommand{\weil}{{\mathrm{W}\!\mathrm{P}}}
\newcommand{\rham}{{\mathrm{d}\!\mathrm{R}}}

\newcommand{\R}{{\mathbb R}}

\newcommand{\dev}{\mbox{dev}}

\newcommand{\Hyp}{\mathbb{H}}

\newcommand{\Isom}{\mathrm{Isom}}

\newcommand{\Ad}{{\mathrm{Ad}}}

\newcommand{\Diffeo}{\mbox{Diff}}

\newcommand{\tr}{\mbox{\rm tr}}

%{\mbox{grad}}
\newcommand{\isom}{\mathfrak{isom}}

\newcommand{\SO}{\mathrm{SO}}
\newcommand{\so}{\mathfrak{so}}

\newcommand{\E}{\mathcal{E}}
\newcommand{\V}{\mathcal{V}}

\newcommand{\BG}{\mathcal{BG}}
\newcommand{\Hol}{\mathrm{Hol}}

\newcommand{\Teich}{\mathcal{T}}

\DeclareFontFamily{OT1}{pzc}{}
\DeclareFontShape{OT1}{pzc}{m}{it}{<-> s * [0.99] pzcmi7t}{}
\DeclareMathAlphabet{\mathscr}{OT1}{pzc}{m}{it}

%\bibliography{bs-bibliography.bib}

\begin{document}

\setcounter{secnumdepth}{3}
\setcounter{tocdepth}{2}

\title[Generalization of a formula of Wolpert for balanced geodesic graphs]{Generalization of a formula of Wolpert for balanced geodesic graphs on closed hyperbolic surfaces}

\author[Fran\c{c}ois Fillastre]{Fran\c{c}ois Fillastre}
\address{Fran\c{c}ois Fillastre: Universit\'e de Cergy-Pontoise
UMR CNRS 8088
Cergy-Pontoise, 95000, France.} \email{francois.fillastre@u-cergy.fr}

\author[Andrea Seppi]{Andrea Seppi}
\address{Andrea Seppi: CNRS and Universit\'e Grenoble Alpes, 100 Rue des Math\'ematiques, 38610 Gi\`eres, France.} \email{andrea.seppi@univ-grenoble-alpes.fr}

%\date{\today}

%\thanks{The second author was partially supported by FIRB 2010 project ``Low dimensional geometry and topology'' (RBFR10GHHH003) and by
%PRIN 2012 project ``Moduli strutture algebriche e loro applicazioni''.
%The second author is a member of the national research group GNSAGA}

\begin{abstract}
A well-known theorem of Wolpert shows that the Weil--Petersson symplectic form on Teichm\"uller space, computed on two infinitesimal twists along simple closed geodesics on a fixed hyperbolic surface, equals the sum of the cosines of the intersection angles. We define an infinitesimal deformation starting from a more general object, namely a balanced geodesic graph, by which any tangent vector to Teichm\"uller space can be represented. We then prove a generalization of Wolpert's formula for these deformations. In the case of simple closed curves, we recover the theorem of Wolpert.
\end{abstract}

\maketitle

%\tableofcontents

\section{Introduction}
Given a closed oriented surface $S$ of genus $g\geq 2$, Teichm\"uller space $\Teich(S)$ is classically defined as the space of complex structures on $S$, up to biholomorphisms isotopic to the identity. By the uniformization theorem, it is naturally identified to what is sometimes called 
Fricke space $\mathcal F(S)$, namely the space of hyperbolic metrics on $S$ up to isometries
isotopic to the identity.

The Weil--Petersson metric is a K\"ahler metric on $\Teich(S)$, whose definition only involves the point of view of $\Teich(S)$ as the space of complex structures on $S$. However, it turns out to be an extremely interesting object from the hyperbolic geometric point of view. An example of the bridge between the two viewpoints is in fact provided by Wolpert's theorem (\cite{wolpertelementary,wolpertformula}), which we now briefly recall. Given two simple closed geodesics $c,c'$ on a fixed closed hyperbolic surface $(S,h)$, let us denote by $t_c$ and $t_c'$ the \emph{infinitesimal twists} along $c$ and $c'$, namely 
$$t_c=\left.\frac{d}{dw}\right|_{w=0}E_l^{w\cdot c}(h)~,$$
where $E_l^{w\cdot c}(h)$ is the new hyperbolic surface obtained from $h$ by a \emph{left earthquake} 
along $c$ --- that is, by cutting along $c$ and re-glueing after a left twist of length $w$. Then Wolpert showed that, if $\omega_\weil$ denotes the symplectic form of the Weil--Petersson K\"ahler metric, then
\begin{equation} \label{eq wolpert formula}
\omega_\weil(t_{c},t_{c'})=\frac{1}{2}\sum_{p\in c\cap c'}\cos \theta_p~,
\end{equation}
where, for every point of intersection $p$ between $c$ and $c'$, $\theta_p$ denotes the angle of intersection at $p$.

\subsection*{Balanced geodesic graphs}

In this paper, we will define a more general type of infinitesimal deformations on $\Teich(S)$, which generalize twists along simple closed geodesics. These will be associated to a \emph{balanced geodesic graph}, namely a weighted graph $(\mathcal G,\mathbf w)$ on $(S,h)$ whose edges 
are geodesic segments and whose weights satisfy a \emph{balance condition} at every vertex $p$, namely:
\begin{equation}\label{eq balance intro} {\sum_{p\in e} w_{e} v_{e}}=0~, \end{equation}
where the sum is over all edges $e$ which are adjacent to the vertex $p$, $w_e$ denotes the weight of $e$ and $v_e$ is the tangent vector to $e$ in $T_p S$. 

Clearly, every simple closed geodesic can be regarded as a balanced geodesic graph, just by adding a number of vertices so as to make sure that every edge is a segment, and choosing the same weight $w$ for all edges. The balance condition is then trivially satisfied.
In fact, the infinitesimal deformation $t_{(\mathcal G,\mathbf w)}$ we define reduces to the infinitesimal twist in this case.

In order to define $t_{(\mathcal G,\mathbf w)}$, it is easier to use the identification of $\Teich(S)$ to the space of discrete and faithful representations of $\pi_1(S)$ into $\Isom(\Hyp^2)$ (i.e. the group of orientation-preserving isometries of the hyperbolic plane), up to conjugacy. The identification is obtained by associating to a hyperbolic metric $h$ on $S$ its \emph{holonomy representation} $\rho$. The tangent space to the space of representations up to conjugacy is then known to be identified (see \cite{Goldman}) to the group cohomology $H^1_{\Ad\rho}(\pi_1(S),\isom(\Hyp^2))$. The tangent vector $t_{(\mathcal G,\mathbf w)}$ is then defined by lifting to the universal cover of $S$ a generic closed loop representing $\gamma\in\pi_1(S)$, and taking the weighted sum of the infinitesimal twists along the lifts of the edges of $\mathcal G$ met by the lift of the closed loop. The balance condition ensures that  $t_{(\mathcal G,\mathbf w)}$ is well-defined. If $(\mathcal G,\mathbf w)=(c,1)$ is a simple closed geodesic with weight 1, $t_{(\mathcal G,\mathbf w)}$ actually coincides with the variation of the holonomy of the twisted metrics $E_l^{w\cdot c}(h)$, and we thus recover the infinitesimal twist $t_c$.

\subsection*{Motivations}

 \red{Besides the main result explained below, the main motivation for this paper is the introduction of the balanced geodesic graphs. Weighted multicurves are a particular case of our balanced geodesic graph, as well as a particular case of measured geodesic laminations. While the original motivation behind measured geodesic laminations is to construct a natural completion of the space of weighted multicurves, the space of balanced geodesic graphs seems an interesting object to consider because it is endowed with a natural vector space structure (described below), thus including the ``span'' of weighted multicurves. In turn, balanced geodesic graphs have a more combinatorial nature, easier to handle with than measured geodesic laminations ---see e.g. \cite{SB} where Wolpert formula where generalized to the case of geodesic laminations.}

\red{Let us highlight} two main motivations behind the study of balanced geodesic graphs. The first motivation is directly related to hyperbolic geometry, in the spirit of the study of earthquakes of hyperbolic surfaces, see for instance \cite{thurstonearth,bonahonearth,mcmullerearth}. In fact, in \cite{flippable} \emph{left/right flippable tilings} were introduced. These are (non-continuous) transformations between hyperbolic surfaces, associated to certain geodesic graphs on $(S,h)$ such that the faces of the graph are divided into \emph{black} and \emph{white} faces, and the transformation is obtained by \emph{flipping} the black faces, \red{see Figure~\ref{fig:flip}}. A balanced geodesic graph $(\mathcal G,\mathbf w)$ can be interpreted as a tangent vector to a path of flippable tilings on hyperbolic surfaces, where the black faces are collapsing to the vertices of $\mathcal G$ and the weights $\mathbf w$ are the derivatives of the lengths of such black faces. \red{Flippable tilings} are thus a generalization of the deformations by earthquakes along simple closed curves, which have an infinitesimal twist as tangent vector.  \red{Balanced geodesic graph, that are the object of the present paper, are generalizations of (left, say) infinitesimal earthquake}.

The second motivation comes from the deep relation between Teichm\"uller theory and geometric structures on three-dimensional manifolds. In the case of hyperbolic structures in dimension three, this is an important phenomenon, which goes back to Bers' simultaneous uniformization theorem for quasi-Fuchsian manifolds \cite{bers}, and has been widely developed \cite{BrockWPconvexcore,bonahonbouts,uhlenbeck,taubes,seppiminimal}. Analogous of quasi-Fuchsian manifolds in Lorentzian geometry are \emph{maximal globally hyperbolic manifolds}, and their relation with Teichm\"uller theory has been initiated in \cite{mess}. Here we are mostly interested in \emph{flat} maximal globally hyperbolic manifolds, as studied in \cite{Bonsante,barbot,bonseppicodazzi}. The relevant point here is that a balanced geodesic graph {with positive weights, and such that $S\setminus\mathcal{G}$ is a disjoint union of convex polygons,} is naturally the \emph{dual} to convex polyhedral surface in a flat globally hyperbolic manifold $M$ of dimension three {homeomorphic to $S\times \R$}. Moreover, as observed by Mess, the isomorphism between Minkowski space $\R^{2,1}$ and the Lie algebra $\isom(\Hyp^2)\cong\so(2,1)$ induces a correspondence between the tangent space of $\Teich(S)$, in the model $H^1_{\Ad\rho}(\pi_1(S),\isom(\Hyp^2))$ of the representation variety, and the translation part of the holonomies of manifolds $M$ as above.

This enables us to show that the map which associated to a balanced geodesic graph $(\mathcal G,\mathbf w)$ the deformation $t_{(\mathcal G,\mathbf w)}$ in $T_{[h]}\Teich(S)$ is surjective. In other words, \emph{any} tangent vector to $\Teich(S)$ can be represented as the deformation associated to some balanced geodesic graph. This is not true if one only considers (weighted) simple closed geodesics. Hence in this sense our main result below, which extends Wolpert's formula, is quite more general since it can be used to represent the Weil--Petersson form
applied to \emph{any} two tangent vectors to $\Teich(S)$.

\subsection*{Main result}

Let us now come to the statement of the main result. Recall that $\omega_\weil$ denotes the symplectic form of the Weil--Petersson metric on $\Teich(S)$.

\begin{theorem*}
Let $(\mathcal G,\mathbf w)$ and $(\mathcal G',\mathbf w')$ be two balanced geodesic graphs on the hyperbolic surface $(S,h)$. Then
\begin{equation} \label{eq main formula}
\omega_\weil(t_{(\mathcal G,\mathbf w)},t_{(\mathcal G',\mathbf w')})=\frac{1}{2}\sum_{p\in e\cap e'}w_ew_e'\cos \theta_{e,e'}~,
\end{equation}
where $e$ and $e'$ are intersecting edges of $\mathcal G$ and $\mathcal G'$ and $\theta_{e,e'}$ is the angle of intersection
between $e$ and $e'$ according to the orientation of $S$.
\end{theorem*}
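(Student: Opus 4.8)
The plan is to compute $\omega_\weil$ through its cohomological description. Under the identification $T_{[\rho]}\Teich(S)\cong H^1_{\Ad\rho}(\pi_1(S),\isom(\Hyp^2))$, the Weil--Petersson form agrees, up to a universal multiplicative constant, with Goldman's symplectic form, which is given by the cup product
\[
\omega_\weil([\sigma],[\sigma'])=c\,\big\langle [\sigma]\smile[\sigma'],[S]\big\rangle,
\]
where the coefficient pairing $\isom(\Hyp^2)\times\isom(\Hyp^2)\to\R$ is a fixed multiple of the Killing form and $[S]$ is the fundamental class. I would exploit Mess' identification $\isom(\Hyp^2)\cong\R^{2,1}$, under which this pairing becomes the Lorentzian inner product $\langle\cdot,\cdot\rangle$; the advantage is that the infinitesimal translation $X_e$ along the geodesic carrying an edge $e$ corresponds to a unit spacelike vector, so that the cosine in \eqref{eq main formula} will emerge directly from Lorentzian geometry.

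First I would choose good de Rham representatives. Realize $t_{(\mathcal G,\mathbf w)}$ by the $\isom(\Hyp^2)$-valued (distributional) $1$-form $\sigma$ on $S$ supported on $\mathcal G$, whose restriction to an edge $e$ is $w_e X_e$ paired with the transverse Dirac current. In the Minkowski model, fixing a vertex $p$ with $\hat p\in\R^{2,1}$ the corresponding timelike vector, one has $X_e=\hat p\times v_e$, which is \emph{linear} in the tangent vector $v_e$; hence the holonomy-invariant residue of $\dn\sigma$ concentrated at $p$ equals $\hat p\times\big(\sum_{p\in e}w_e v_e\big)$, and the balance condition \eqref{eq balance intro} is exactly the statement that $\sigma$ is $\dn$-closed. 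This reinterprets \eqref{eq balance intro} as the cocycle condition and confirms that $t_{(\mathcal G,\mathbf w)}$ is a well-defined class, consistently with the loop-crossing definition.

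The heart of the argument is to evaluate the cup product of two such graph-supported closed forms, which is computed by the pairing $\int_S B(\sigma\wedge\sigma')$. After a transversality perturbation making $\mathcal G$ and $\mathcal G'$ meet transversally, away from their vertices, the wedge of two transverse Dirac currents concentrates at the crossing points, so the pairing localizes: each $p\in e\cap e'$ contributes $\varepsilon_p\,w_e w_{e'}\langle X_e,X_{e'}\rangle$, where $\varepsilon_p=\pm1$ is the local intersection sign fixed by the orientation of $S$. This is the analogue of the fact that the cup product of Poincar\'e duals computes an intersection number, now with the coefficient pairing inserted at each crossing. The remaining computation is purely geometric: since $v_e,v_{e'}\in\hat p^\perp$ are unit vectors at angle $\theta_{e,e'}$, a computation with the Lorentzian cross product gives $\langle X_e,X_{e'}\rangle=\langle\hat p\times v_e,\hat p\times v_{e'}\rangle=\cos\theta_{e,e'}$, and the sign $\varepsilon_p$ is absorbed by measuring the angle according to the orientation of $S$. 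Summing over all crossings then yields the right-hand side of \eqref{eq main formula}, up to the constant $c$.

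Finally I would fix the constant by calibration. Specializing to $(\mathcal G,\mathbf w)=(c,1)$ a single simple closed geodesic, the class $t_{(\mathcal G,\mathbf w)}$ reduces to the infinitesimal twist $t_c$, and comparison with Wolpert's original identity \eqref{eq wolpert formula} pins down $c$ so that the global prefactor is $\tfrac12$. The main obstacle is the localization step: controlling the cup product of singular, graph-supported cocycles and, above all, keeping track of orientations so that the bare Lorentzian product $\langle X_e,X_{e'}\rangle=\cos\theta_{e,e'}$ combines with $\varepsilon_p$ into the oriented signed cosine. This can be handled either by smoothing the currents and applying a standard intersection-theoretic computation for the cup product, or by a direct bookkeeping of how lifts of loops cross the lifts of $\mathcal G$ and $\mathcal G'$ in $\wti S$, matching the group-cohomological definition of $t_{(\mathcal G,\mathbf w)}$.
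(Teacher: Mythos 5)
Your proposal is correct in outline and follows the same skeleton as the paper's proof: identify $\omega_\weil$ with Goldman's cup-product pairing on $H^1_{\Ad\rho}(\pi_1(S),\isom(\Hyp^2))$, represent the classes $t_{(\mathcal G,\mathbf w)}$ by $1$-forms concentrated along the graphs, localize the wedge pairing at the intersection points, and extract the cosine from the Minkowski cross product. The two places where you diverge are execution choices. First, you work with Dirac currents supported on $\mathcal G$ and defer the cup-product localization to a smoothing argument; the paper does the smoothing from the start: it builds a smooth closed representative $\alpha=\left(w_e\,\mathfrak t(\dev(\widetilde e))\right)d(f\circ\delta)$ supported in a tubular neighborhood $N_\epsilon(\mathcal G)$ (with $f$ odd, $f(\pm\epsilon)=\pm\tfrac12$), extends it across the vertex balls using exactly your observation that the balance condition kills the residue there, and then the localization becomes the elementary Stokes computation $\int_{Q_p}d(f\circ\delta)\wedge d(f\circ\delta')=1$ on each component $Q_p$ of $N_\epsilon(\mathcal G)\cap N_\epsilon(\mathcal G')$. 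This construction also settles your orientation worry cleanly: the local representative is manifestly independent of the choice of edge orientation (replacing $\delta$ by $-\delta$ flips both $\mathfrak t(\dev(\widetilde e))$ and $d(f\circ\delta)$), so no separate intersection sign $\varepsilon_p$ has to be tracked against the angle convention. Second, you fix the overall constant by calibration against Wolpert's classical identity, whereas the paper computes it outright: Goldman's theorem gives $(\Hol\circ U)^*\omega_{\mathrm G}=\tfrac14\omega_\weil$, and the Killing/Minkowski relation $\kappa(\Lambda(x),\Lambda(x'))=2\langle x,x'\rangle$ produces a factor $2$ at each crossing, whence $\tfrac14\cdot 2=\tfrac12$. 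Your calibration is legitimate (it needs a pair of simple closed geodesics crossing once at an angle with nonvanishing cosine, plus the identification of $t_{(c,1)}$ with the infinitesimal twist $t_c$, which the paper cites), but it makes Wolpert's theorem an input, so your argument does not reprove Wolpert's formula as a special case the way the paper's self-contained computation does; it also hides the normalization bookkeeping (Killing form versus trace form, tangent versus cotangent identification) that the paper is explicitly careful about when comparing with Goldman's original conventions.
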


Therefore, when $(\mathcal G,\mathbf w)=(c,1)$ is a simple closed geodesic with weight 1, we  recover Wolpert's formula \eqref{eq wolpert formula}. Let us remark again that our construction permits to represent \emph{any} tangent vector in $T_{[h]}\Teich(S)$ as $t_{(\mathcal G,\mathbf w)}$ for some balanced geodesic graph $(\mathcal G,\mathbf w)$, and thus our result is in a greater generality than Wolpert's. 

There is a small caveat in interpreting \eqref{eq main formula}. If the two geodesic graphs  $\mathcal G$ and $\mathcal G'$ do not have vertices in common and intersect transversely, it is clear what the intersection points and angles are. On the other hand, if $\mathcal G$ and $\mathcal G'$ have some non-transverse intersection or share some vertex, the points of intersection must be counted by perturbing one of the two graphs by a small isotopy, so as to make the intersection transverse. See Figures \ref{fig:perturbation1} and \ref{fig:perturbation2}. This is just a caveat for counting points of intersections (it turns out that the result not depend on the chosen perturbation, as a consequence of the balance condition \eqref{eq balance intro}), while of course the angles $\cos\theta_{e,e'}$ are the angles between the original geodesic edges $e,e'$.

To conclude the introduction, let us mention that the proof of our main result relies on two main tools. The first tool is a theorem of Goldman (\cite{Goldman}) which shows that the Weil--Petersson symplectic form on $\Teich(S)$ equals (under the holonomy map) a form defined only in terms of the group cohomology $H^1_{\Ad\rho}(\pi_1(S),\isom(\Hyp^2))$. The second tool is de Rham cohomology with values in certain flat vector bundles over $S$ of rank $3$. In this setup, our proof becomes rather elementary, and we thus also provide a simple proof of Wolpert's formula for simple closed geodesics as a particular case.

%\subsection*{Organization of the paper}

\subsection*{Acknowledgements}
We would like to thank Jean-Marc Schlenker for many discussions and encouragements, Brice Loustau for his interest in our work, and Scott Wolpert for reading our paper and suggesting some improvements in Section~\ref{sec balanced graph}. We are grateful to an anonymous referee whose comments helped to improve the presentation. The second author would like to thank Gabriele Mondello for several discussions and for the help in figuring out the correct factors from the literature. 

\section{Teichm\"uller space and Weil--Petersson metric}

Let $S$ be a closed oriented surface of genus $g\geq 2$. The \emph{Teichm\"uller space} of $S$ is defined as:
$$\Teich (S)=\{\text{complex structures on }S\}/\Diffeo_0(S)~,$$
where $\Diffeo_0(S)$ is the group of diffeomorphisms of $S$ isotopic to the identity, and it acts by pre-composition of a complex atlas. Namely, two complex structures on $S$ are equivalent in 
$\Teich (S)$ if and only if there exists a biholomorphism isotopic to the identity. In this section we collect some preliminary results on Teichm\"uller space, spaces of representations of the fundamental group of $S$, and Weil--Petersson metric.

\subsection*{Weil--Petersson metric}

Teichm\"uller space $\Teich(S)$ is a manifold of real dimension $3|\chi(S)|$, and is endowed with a structure of complex manifold. Moreover, it is endowed with several metric structures, one of which is the \emph{Weil--Petersson metric}, which turns out to be a natural K\"ahler structure on $\Teich(S)$. Let us recall briefly its definition.

Let us fix a complex structure $X$ on $S$. It is known that the tangent space $T_{[X]}\Teich(S)$  of $\Teich(S)$ is naturally identified to a quotient of the vector space $BD(X)$ of Beltrami differentials, namely sections of the vector bundle $\overline K_X\otimes K_X^{-1}$, where $K_X$ is the canonical bundle of $(S,X)$. More precisely, $T_{[X]}\Teich(S)\cong BD(X)/BD_0(X)$, where $BD_0(X)$ is the subspace of Beltrami differentials which induce trivial infinitesimal deformations of $X$.

On the other hand, the cotangent space $T^*_{[X]}\Teich(S)$ is naturally identified to the space of holomorphic quadratic differentials $QD(X)=H^0(S,K_X^2)$. The identification is given by the pairing 
on $QD(X)\times BD(X)$
defined by
$$(\phi,\mu)\mapsto\int_S \phi\mu~.$$
In fact, if in local complex coordinates $\phi=\phi(z)dz^2$ and $\mu=\mu(z)d\bar z/dz$, then $\phi\mu=\phi(z)\mu(z)dz\wedge d\bar z$ is a quantity which can be naturally integrated over $S$.
The fundamental property is then the fact that for every $\phi\in QD(X)$, 
$$\int_S \phi\mu=0\quad\textrm{for every }\mu\in BD(X)\Leftrightarrow \mu\in BD_0(X)~.$$
Hence we have a vector space isomorphism $QD(X)\cong (BD(X)/BD_0(X))^*$.

The Weil--Petersson product is then easily defined on the cotangent space, by
$$(\phi,\phi')\mapsto \int_S \frac{\phi\overline{\phi'}}{h_X}~,$$
where $h_X$ is the unique hyperbolic metric (i.e. Riemannian metric of constant curvature $-1$) compatible with the complex structure $X$, provided by the Uniformization Theorem (\cite{koebeunif}). By a similar argument as above, the quantity $\phi\overline{\phi'}/h_X$ is indeed of the correct type to be integrated on $S$.

\subsection*{Hyperbolic metrics and Fuchsian representations}

We will be using other two important models of $\Teich(S)$. 

\begin{enumerate}
\item By the aforementioned Uniformization Theorem, given any complex structure $X$ on $S$, there is a unique hyperbolic metric $h_X$ on $S$ compatible with $X$. Let us define the \emph{Fricke space} of $S$ as:
$$\mathcal F(S)=\{\text{hyperbolic metric on }S\}/\Diffeo_0(S)~,$$
where $\Diffeo_0(S)$ clearly acts by pull-back. It is easy to check that the map $X\to h_X$ is equivariant for the actions of $\Diffeo_0(S)$, and therefore induces a diffeomorphism
\begin{equation}
U:\Teich(S)\to\mathcal F(S)~.
\end{equation}
The inverse of $U$ is simply the map which associates to a hyperbolic metric $h$ the complex structure induced by $h$, which is obtained by choosing local isothermal coordinates.
\item Given a hyperbolic metric $h$ on $S$, let $\pi:\widetilde S\to S$ be the universal cover of $S$. Then $\pi^*h$ is a hyperbolic metric, which is complete since $S$ is compact, on the simply connected surface $S$. Hence $(\widetilde S,\pi^*h)$ is isometric to  the hyperbolic plane $\Hyp^2$ by \cite[Theorem 8.6.2]{opac-b1124711}. Such an isometry (chosen to be orientation-preserving) is unique up to post-compositions with elements in group $\Isom(\Hyp^2)$ of orientation-preserving isometries of $\Hyp^2$, and is called \emph{developing map}. Let us denote it by $\dev:(\widetilde S,\pi^*h)\to\Hyp^2$. It turns out to be equivariant for some representation $\rho:\pi_1(S)\to\Isom(\Hyp^2)$, called the \emph{holonomy map}. That is:
$$\dev\circ\gamma=\rho(\gamma)\circ\dev$$
for every $\gamma\in\pi_1(S)$.
Since $\dev$ is well-defined up to post-composition, $\rho$ is well-defined up to conjugacy by elements of $\Isom(\Hyp^2)$. This provides a map 
$$\mathcal F(S)\to \chi(\pi_1(S),\Isom(\Hyp^2))~,$$
where $\chi(\pi_1(S),\Isom(\Hyp^2))$ is the \emph{character variety}
$$\mathrm{Hom}(\pi_1(S),\Isom(\Hyp^2))/\!\!/\Isom(\Hyp^2)~.$$
By a theorem of Goldman \cite{goldmanthesis}, this map is a diffeomorphism onto the space of faithful and discrete representations (called \emph{Fuchsian representations}) up to conjugacy, which is precisely a connected component of $\chi(\pi_1(S),\Isom(\Hyp^2))$:
\begin{equation}
\Hol:\mathcal F(S)\to \chi^{f\!d}(\pi_1(S),\Isom(\Hyp^2))~,
\end{equation}
where 
$$\chi^{f\!d}(\pi_1(S),\Isom(\Hyp^2))=\mathrm{Hom}^{f\!d}(\pi_1(S),\Isom(\Hyp^2))/\Isom(\Hyp^2)$$ 
and $\mathrm{Hom}^{f\!d}$ denotes the faithful and discrete representations.

\end{enumerate}

\subsection*{Group cohomology}

The model of Teichm\"uller space $\Teich(S)$ as $\chi^{f\!d}(\pi_1(S),\Isom(\Hyp^2))$ enables to give a simple description of the tangent space. In fact, using the differentials of the maps $U$ and $\Hol$, we can identify
$$T_{[X]}\Teich(S)\cong T_{[h_X]}\mathcal F(S)\cong T_{[\rho]}\chi(\pi_1(S),\Isom(\Hyp^2))~,$$
where $[\rho]=\Hol([h_X])$.  
In \cite{Goldman}, the tangent space to the space of representations is described as the group cohomology with values in the Lie algebra $\isom(\Hyp^2)$:
$$T_{[\rho]}\chi(\pi_1(S),\Isom(\Hyp^2))\cong H^1_{\Ad\rho}(\pi_1(S),\isom(\Hyp^2))~.$$
The vector space $H^1_{\Ad\rho}(\pi_1(S),\isom(\Hyp^2))$ is the quotient
$$H^1_{\Ad\rho}(\pi_1(S),\isom(\Hyp^2))=\frac{Z^1_{\Ad\rho}(\pi_1(S),\isom(\Hyp^2))}{B^1_{\Ad\rho}(\pi_1(S),\isom(\Hyp^2))}~,$$
where:
\begin{itemize}
\item $Z^1_{\Ad\rho}(\pi_1(S),\isom(\Hyp^2))$ is the space of \emph{cocycles} $\tau:\pi_1(S)\to\isom(\Hyp^2)$ with respect to the adjoint action of $\rho$, that is, functions with values in the Lie algebra $\isom(\Hyp^2)$
satisfying:
\begin{equation} \label{eq cocycle condition}
\tau(\gamma\eta)=\Ad\rho(\gamma)\cdot\tau(\eta)+\tau(\gamma)~.
\end{equation}
This is essentially the condition of being a representation of $\pi_1(S)$ into $\Isom(\Hyp^2)$, at first order.
\item $B^1_{\Ad\rho}(\pi_1(S),\isom(\Hyp^2))$ is the space of \emph{coboundaries}, namely cocycles of the form
\begin{equation} \label{eq defi coboundary}
\tau(\gamma)=\Ad\rho(\gamma)\cdot\tau_0-\tau_0~,
\end{equation}
for some $\tau_0\in\isom(\Hyp^2)$.
This is the first-order condition for a deformation of being trivial in $\chi(\pi_1(S),\Isom(\Hyp^2))$, that is, of being tangent to a path of representations $\rho_t$ obtained from $\rho$ by conjugation.
\end{itemize}

\subsection*{Goldman symplectic form}

In the fundamental paper \cite{Goldman}, Goldman introduced a symplectic form $\omega_{\mathrm G}$ on the space $\chi(\pi_1(S),\isom(\Hyp^2))$ (actually, the construction holds when replacing $\isom(\Hyp^2)$ by a more general Lie group $G$) and showed that it coincides  (up to a factor) with the symplectic form $\omega_{\weil}$ of the Weil--Petersson Hermitian metric.

The Goldman form is defined as follows. Recall from the previous section that the tangent space $T_{[\rho]}\chi(\pi_1(S),\Isom(\Hyp^2))$ is identified to the group cohomology $H^1_{\Ad\rho}(\pi_1(S),\isom(\Hyp^2))$. Then one can define a pairing
$$H^1_{\Ad\rho}(\pi_1(S),\isom(\Hyp^2))\times H^1_{\Ad\rho}(\pi_1(S),\isom(\Hyp^2))\xrightarrow{\omega_{\mathrm G}} H^2_{\Ad\rho}(\pi_1(S),\R)\cong\R~.$$
The first arrow is obtained by the cup product in group cohomology, paired by using the Killing form of $\isom(\Hyp^2)$. The identification between $H^2(\pi_1(S),\R)$ and $\R$ is then given by evaluation on the fundamental top-dimensional class of the closed oriented manifold $S$. 

Recall that the map $\Hol\circ U$ associates to the Teichm\"uller class of a complex structure $X$ on $S$ the holonomy representation of the uniformizing hyperbolic metric $h_X$. The differential of $\Hol\circ U$ should hence be consider as a vector space isomorphism
$$d(\Hol\circ U):T_{[X]}\Teich(S)\to H^1_{\Ad\rho}(\pi_1(S),\isom(\Hyp^2))~.$$ 
In \cite{Goldman}, Goldman proved:
\begin{equation} \label{eq goldman theorem}
(\Hol\circ U)^*\omega_{\mathrm G}=\frac{1}{4}\omega_\weil~.
\end{equation}
Concerning Equation \eqref{eq goldman theorem}, we remark that in Goldman's original paper \cite{Goldman} there is a different factor appearing. This is due to two reasons:
\begin{itemize}
\item Goldman uses the trace form in the $\mathfrak{sl}(2,\R)$ model of $\isom(\Hyp^2)$, which is defined as $\mathfrak b(X,Y)=\tr(XY)$, instead of the Killing form which turns out to be $\kappa=4\mathfrak b$.
\item The original theorem of Goldman concerns the Weil--Petersson metric on $T^*\Teich(S)$, hence the %identification between $T_{[X]}\Teich(S)$ and $T^*_{[X]}\Teich(S)$. Hence the 
choice of an identification between $T_{[X]}\Teich(S)$ and $T^*_{[X]}\Teich(S)$ may result in different coefficients for the Weil--Petersson metric.
\end{itemize}
As a reference for Equation \eqref{eq goldman theorem}, which is the formula we actually need in this paper, see \cite[Section 2]{bricegt}.

There actually is another description of the Goldman form in terms of de Rham cohomology, which will be introduced in Section \ref{sec de rham}.

\section{Some properties of the hyperboloid model} \label{sec hyperboloid}

It will be useful for this paper to consider the hyperboloid model of $\Hyp^2$. Namely, let us consider (2+1)-dimensional Minkowski space, which is the vector space $\R^3$ endowed with the standard bilinear form of signature $(2,1)$:
$$\R^{2,1}=(\R^3,\langle x,x'\rangle=x_1x_1'+x_2x_2'-x_3x_3')~.$$
It turns out that the induced bilinear form on the upper connected component of the two-sheeted hyperboloid (which is simply connected) gives a complete hyperbolic metric. It is thus isometric to $\Hyp^2$, again by  \cite[Theorem 8.6.2]{opac-b1124711}. Hence we will identify
$$\Hyp^2=\{x\in\R^{2,1}:\langle x,x\rangle=-1\,,\,x_3>0\}~.$$

\subsection*{Description of the Lie algebra}

By means of this identification, we have
$$\Isom(\Hyp^2)=\SO_0(2,1)~,$$
namely, the group of orientation-preserving isometries of $\Hyp^2$ is the identity component in the group of linear isometries of the Minkowski bilinear form. We then also have the following identification for the Lie algebra:
$$\isom(\Hyp^2)=\so(2,1)~,$$
where $\so(2,1)$ are skew-symmetric matrices with respect to the Minkowski metric.
A useful description for this Lie algebra is provided by the Minkowski cross product, which is the analogue for $\R^{2,1}$ of the classical Euclidean cross product. This provides an isomorphism
$$\Lambda:\R^{2,1}\to \so(2,1)~,$$
namely
$$\Lambda(x)(y)=y\boxtimes x\in\R^{2,1}$$
for any $y\in\R^{2,1}$. More explicitly,
$$\Lambda\begin{pmatrix} x_1 \\ x_2 \\ x_3 \end{pmatrix}=\begin{pmatrix} 0 & x_3 & -x_2 \\ -x_3 & 0 & x_1 \\ -x_2 & x_1 & 0 \end{pmatrix}\,.$$

\subsection*{Hyperbolic isometries}

An example is given by hyperbolic isometries. Every geodesic $\ell$ of $\Hyp^2$ is of the form $\ell=\Hyp^2\cap x^\perp\subset \R^{2,1}$, for some $x\in\R^{2,1}$ with $\langle x,x\rangle=1$, {and $x^ \perp$ is the plane orthogonal to the vector $x$ for $\langle\cdot,\cdot\rangle$}. Moreover, the orientation of $\Hyp^2$ and the direction of $x$ determine an orientation of $\ell$. Then we define $T_t(\ell)$ the hyperbolic isometry which preserves $\ell$ setwise and translates every point of $\ell$ by a length $t$ according to the orientation of $\ell$. We will also denote 
$$\mathfrak t(\ell):=\left.\frac{d}{dt}\right|_{t=0}T_t(\ell)~,$$
namely, $\mathfrak t(\ell)$ is the generator of the 1-parameter group $T_t(\ell)$, or in other words the infinitesimal translation along $\ell$.

\begin{example}
If we pick $x=(1,0,0)$, then $\ell=\Hyp^2\cap\{x_1=0\}$ (see also Figure \ref{fig:hyperboloid}) and 
$$T_t(\ell)=\begin{pmatrix} 1 & 0 & 0 \\ 0 & \cosh t & \sinh t \\ 0 & \sinh t & \cosh t  \end{pmatrix}\,.$$
Hence the infinitesimal isometry is
$$\mathfrak t(\ell):=\left.\frac{d}{dt}\right|_{t=0}T_t(\ell)=\begin{pmatrix} 0 & 0 & 0 \\ 0 & 0 & 1 \\ 0 & 1 & 0  \end{pmatrix}=\Lambda\begin{pmatrix} 1 \\ 0 \\ 0 \end{pmatrix}~.$$
\end{example}

%\begin{figure}[htbp]
%\centering
%%\includegraphics[height=5cm]{WCM0213.pdf}
%\caption{In the hyperboloid model $\Hyp^2$, the geodesic $\ell=\Hyp^2\cap v^\perp$.
%\label{fig:hyperboloid}}
%\end{figure}

\begin{figure}
\begin{center}
\psfrag{v}{$x$}
\psfrag{d}{$\ell$}
\psfrag{h}{$\mathbb{H}^2$}
\includegraphics[scale=1]{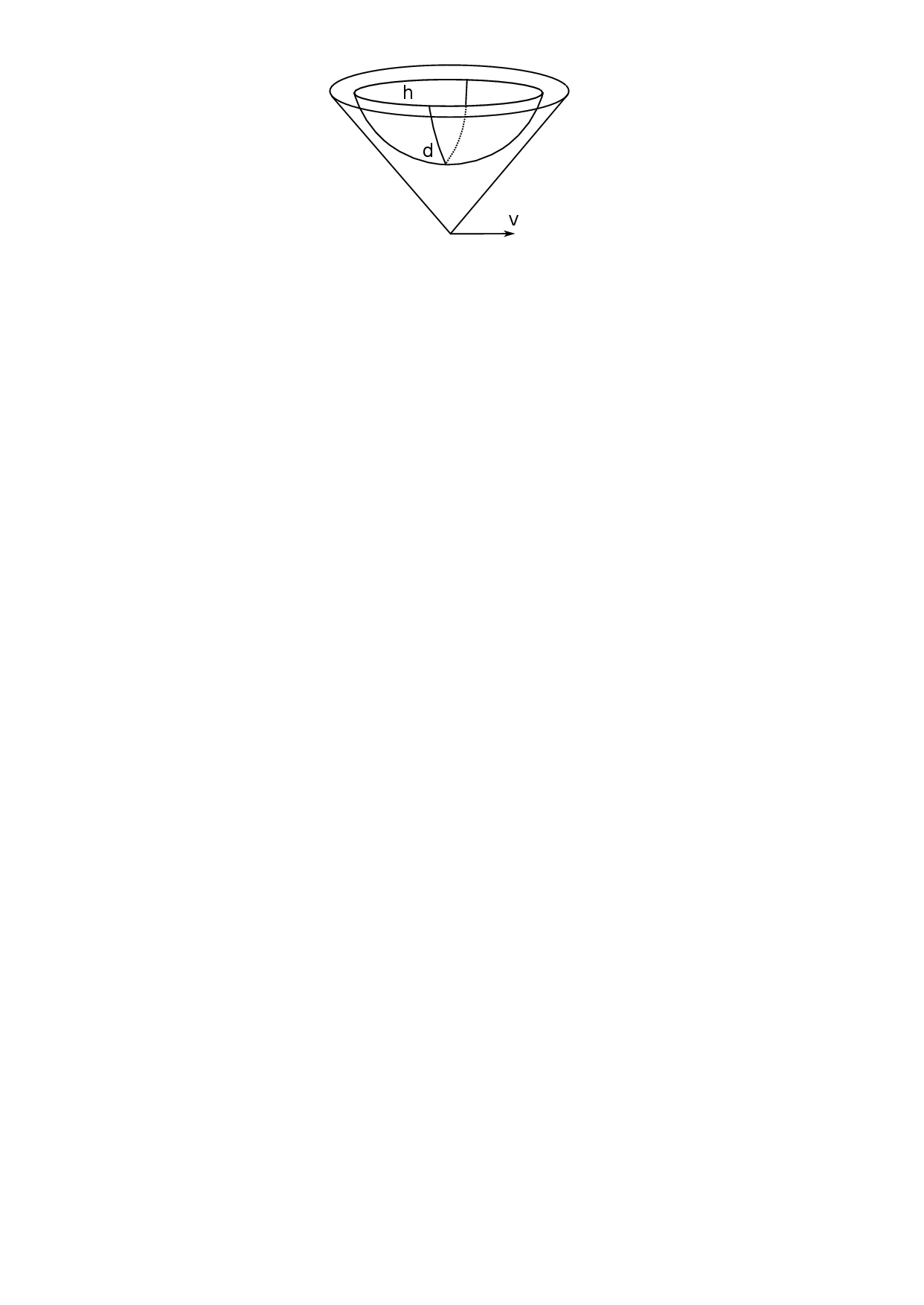}
\end{center}\caption{In the hyperboloid model $\Hyp^2$, the geodesic $\ell=\Hyp^2\cap x^\perp$.
\label{fig:hyperboloid}}
\end{figure}

In general, if $\ell$ is an (oriented) geodesic, intersection of $\Hyp^2$ and $x^\perp$ endowed with the induced orientation, then the following formula holds:
\begin{equation}
\mathfrak t(\ell)=\Lambda(x)~.
\end{equation}

\subsection*{Additional structures on $\so(2,1)$}

Finally, two important properties of the isomorphism $\Lambda$ are the following. See also \cite[Section 2]{ksurfaces}
\begin{itemize}
\item $\Lambda$ is equivariant for the actions of $\SO_0(2,1)$: the standard action on $\R^{2,1}$ and the adjoint action on $\so(2,1)$. Namely, for every $\eta\in\SO_0(2,1)$,
\begin{equation}
\Lambda(\eta\cdot x)=\Ad \eta\cdot \Lambda(x)~.
\end{equation}
\item $\Lambda$ is an isometry between the Minkowski metric and the Killing form on $\so(2,1)$, up to a factor:
\begin{equation} \label{eq killing mink}
\kappa(\Lambda({x}),\Lambda({x}'))=2\langle {x},{x}'\rangle~,
\end{equation}
where the Killing form $\kappa$ for $\so(2,1)$ is:
$$\kappa(\mathfrak a,\mathfrak a')=\tr(\mathfrak a\mathfrak a')~.$$ 
\end{itemize}

\section{Balanced geodesic graphs on hyperbolic surfaces} \label{sec balanced graph}

In this section we introduce balanced geodesic graphs on a closed hyperbolic surface, we show that there is a vector space structure on the space of such objects, and we construct an element in $T_{[h]}\mathcal F(S)$ from any balanced geodesic graph on $(S,h)$. For convenience, we will use $\Hyp^2$ in the hyperboloid model defined in the previous section, and hence we will identify $\Isom(\Hyp^2)=\SO_0(2,1)$ and $\isom(\Hyp^2)=\so(2,1)$.

\subsection*{Balanced geodesic graphs}

Let us fix a hyperbolic metric $h$ on the closed oriented surface $S$.

\begin{defi} \label{defi geod graph}
A \emph{balanced geodesic graph} on $(S,h)$ is the datum of
\begin{itemize}
\item A finite embedded graph $\mathcal{G}=(\mathcal E,\mathcal V)$ in $S$, where $\mathcal E$ is the set of (unoriented) edges of $\mathcal{G}$ and $\mathcal V$ is the set of vertices;
\item The assignment $\mathbf w:\E\to\R$ of weights $\mathbf w(e)=w_e$ for every $e\in\mathcal E$;
\end{itemize}
satisfying the following conditions:
\begin{itemize}
\item Every edge $e$ is a geodesic interval between its endpoints;
\item For every $p\in\mathcal V$, the following \emph{balance condition} holds:
 \begin{equation}\label{eq:mink sum} {\sum_{p\in e} w_{e} v_{e}}=0~, \end{equation}
 where $p\in e$ denotes that $p$ is an endpoint of an edge $e\in \E$, and in this case $v_{e}$ is the unit tangent vector at $p$ to the geodesic edge $e$.
\end{itemize}
%The balanced geodesic graph $(\mathcal{G},\mathbf b)$ is \emph{positive} if all its weights are positive.
\end{defi}

We provide several classes of examples which should account for the abundance of such objects on any surface $(S,h)$. 

\begin{example} \label{example simple closed geo}
Given a simple closed geodesic $c$ on $(S,h)$ and a weight $w$, $c$ is the support of a balanced geodesic graph with weight $w$. In fact, it suffices to declare that the vertex set $\mathcal V$ consists of $n\geq 1$ points on $c$. Then $c$ is split into $n$ edges, and we declare that each edge has weight $w$. Clearly the balance condition  \eqref{eq:mink sum} is satisfied, since there are only two vectors to consider at every vertex, opposed to one another, with the same weight. See the curve in the left of Figure \ref{fig:closed geodesics}. Hence the class of balanced geodesic graphs include weighted simple closed geodesics.
\end{example}

\begin{example} \label{ex closed multigeodesics}
More generally, given any finite collection $c_1,\ldots,c_n$ of (not necessarily simple) closed geodesics $(S,h)$, and any choice of weights $w_1,\ldots,w_n$, the union $\cup_i c_i$ can be made into a balanced geodesic graph. In fact, it suffices to choose the vertices of $\V$ on the geodesics $c_1,\ldots,c_n$, so that every intersection point between $c_i$ and some other geodesic $c_j$ (including self-intersections of $c_i$) is in the vertex set $\mathcal V$. Moreover, it is necessary to add vertices to geodesics $c_i$ which are disjoint from all the geodesics $c_j$ (in particular, $c_i$ has no self-intersection) so as to make every edge of the graph an interval, as in Example \ref{example simple closed geo}. Then we declare that the weight of an edge contained in $c_i$ is $w_i$. In fact, in this case the balance condition will be automatically satisfied, since at every vertex $p\in\V$, tangent vectors come in opposite pairs with the same weight. (A pair is composed of the two opposite vectors tangent  to the same geodesic $c_i$.) See Figure \ref{fig:closed geodesics}. Therefore the balance condition \eqref{eq:mink sum} is satisfied regardless of the initial choice of $w_i$. In particuar, \emph{weighted multicurves} (i.e. collections of disjoint simple closed geodesics endowed with positive weights) are balanced geodesic graphs.

%\begin{figure}[htbp]
%\centering
%%\includegraphics[height=5cm]{WCM0214.pdf}
%\caption{The balance condition is trivially satisfied in the cases of closed geodesics, as in Examples \ref{example simple closed geo} and \ref{ex closed multigeodesics}.
%\label{fig:closed geodesics}}
%\end{figure}

\begin{figure}
\begin{center}
\psfrag{=}{$=$}
\psfrag{+}{$+$}
\psfrag{b}{$w_e$}
\psfrag{b2}{$w'_e$}
\psfrag{b12}{$w_e+w'_e$}
\includegraphics[scale=0.8]{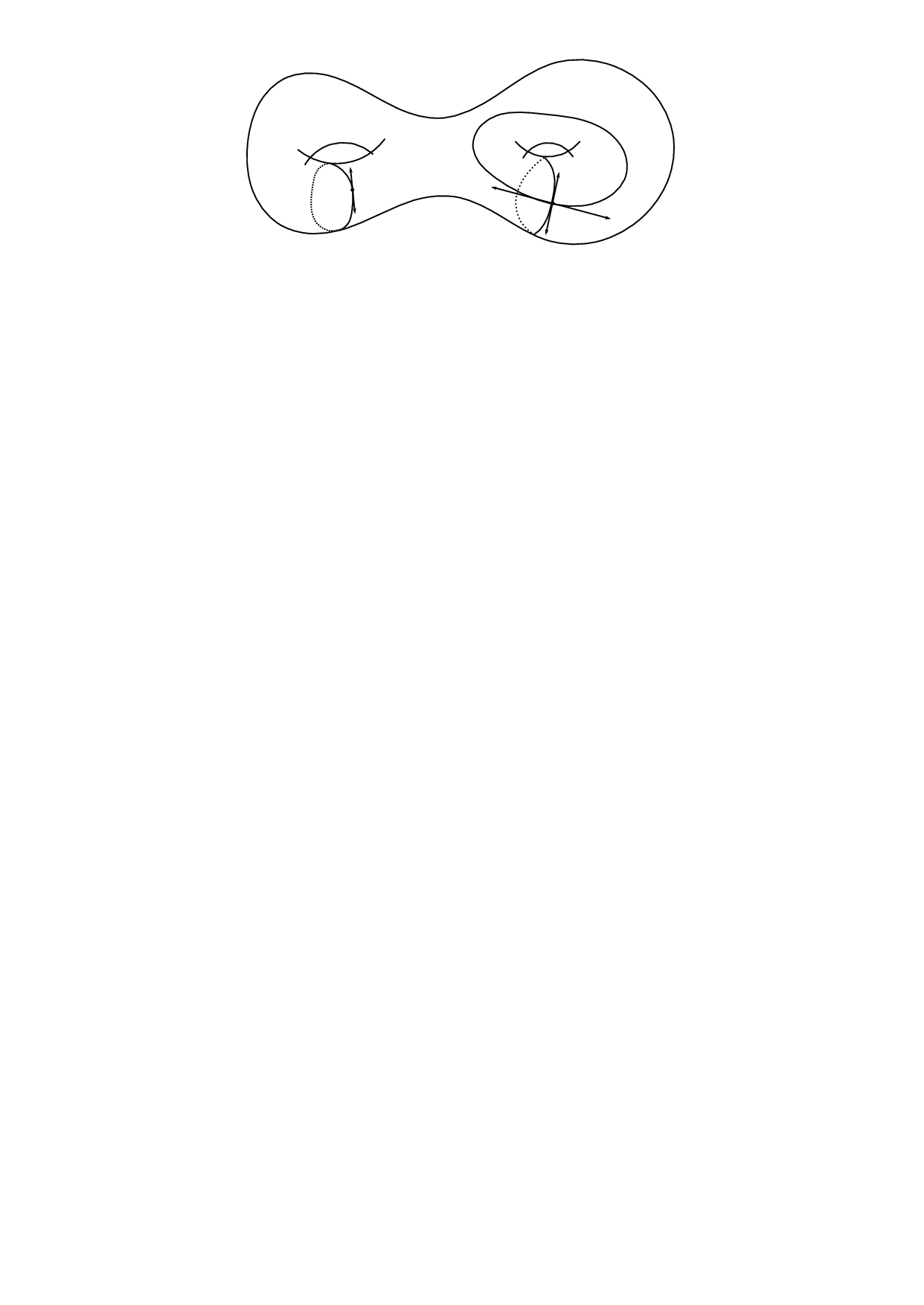}
\end{center}\caption{The balance condition is trivially satisfied in the cases of closed geodesics, as in Examples \ref{example simple closed geo} and \ref{ex closed multigeodesics}.\label{fig:closed geodesics}}
\end{figure}

\end{example}

%\begin{example}
%Similarly to the previous example, any collection of simple closed geodesics $c_1,\ldots,c_n$ on $(S,g)$ such that any intersection between some $c_i$ and $c_j$ is orthogonal, can be made into a balanced geodesic graph, by choosing a arbitrary non-negative weights for every simple closed geodesic and adding points if necessary. \red{picture?}
%\end{example}

\begin{example}
Colin de Verdi\`ere in \cite{cdv} proved that, given any topological triangulation of $(S,h)$, and any choice of positive weights $w_e$ assigned to each edge $e$, there exists a geodesic triangulation, with the same combinatorics of the original triangulation, which satisfies the balance condition \eqref{eq:mink sum} for the prescribed weights $w_e$ divided by the length of $e$. Therefore, this geodesic realization of a topological triangulation is a balanced geodesic graph in the sense of Definition \ref{defi geod graph}.

\end{example}

\subsection*{Vector space structure}
In this subsection we will introduce the space of balanced geodesic graphs on a hyperbolic surface, and show that this space has a vector space structure.

Let us consider the space
$$\BG_{(S,h)}:=\{\text{balanced geodesic graphs on }(S,h)\}/\sim~,$$
where the equivalence relation $\sim$ is defined as follows: two balanced geodesic graphs are equivalent if they can be obtained from one another by adding, or deleting:
\begin{itemize}
\item Points which are endpoints of only two edges (possibly coincident);
\item Edges of weight zero.
\end{itemize}
The space $\BG_{(S,h)}$ defined in this way is naturally endowed with a structure of vector space, defined in the following way. 
For 
$(\mathcal{G},\mathbf w)$ in $\BG_{(S,h)}$ and $\lambda\in \R$, we define
$\lambda (\mathcal{G},\mathbf w)=(\mathcal{G},\lambda \mathbf w)$. For the addition, we define  $(\mathcal{G},\mathbf w)\textbf{+}(\mathcal{G}',\mathbf w')= (\mathcal{G}+\mathcal{G}',\mathbf w+\mathbf w')$, where 
\red{\begin{itemize}
 \item $\mathcal{G}+\mathcal{G}'=(\mathcal E'',\mathcal V'')$ if  $\mathcal{G}=(\mathcal E,\mathcal V)$ and $\mathcal{G}'=(\mathcal E',\mathcal V')$, where  $\mathcal V''=\mathcal V \cup \mathcal V' \cup \bar{\mathcal V}$, and $\bar{\mathcal{V}}$ is the set of intersecting points of $\mathcal E$ and $\mathcal E'$ (may be empty), and $\mathcal E''$ is the set of edges between elements of $\mathcal V''$ which are subsets of elements of $\mathcal E\cup \mathcal E'$;
\item $\mathbf w+\mathbf w'$ are the weights on $\mathcal E''$ defined as follows. For $e\in \mathcal E$:
\begin{itemize}
\item if $e$ does not meet any element of $\mathcal E'$, then the same weight is kept;
\item if $e$ meets an element of $\mathcal E'$ creating a new vertex (an element of $\bar{\mathcal{V}}$),  
new weights are assigned as in the top of Figure~\ref{fig:sum w};
\item if $e$ meets an element of $\mathcal E'$ without creating a new vertex, new weights are assigned as in the bottom of Figure~\ref{fig:sum w}.
\end{itemize}
This procedure is done for every element of $\mathcal E$, and eventually the same weights are kept for elements of $\mathcal E'$ that don't meet any element of $\mathcal E$.
\end{itemize}}
It is easy to check that \eqref{eq:mink sum} is satisfied in every case. 

%\begin{figure}[htbp]
%\centering
%%\includegraphics[height=4cm]{sumbc.png}
%\caption{New assignements of weights for the refinement of two balanced geodesic graphs.}\label{fig:sum w}
%\label{fig:sum}
%\end{figure}

\begin{figure}
\begin{center}
\psfrag{=}{$=$}
\psfrag{+}{$+$}
\psfrag{b}{$w_e$}
\psfrag{b2}{$w'_e$}
\psfrag{b12}{$w_e+w'_e$}
\includegraphics[scale=1]{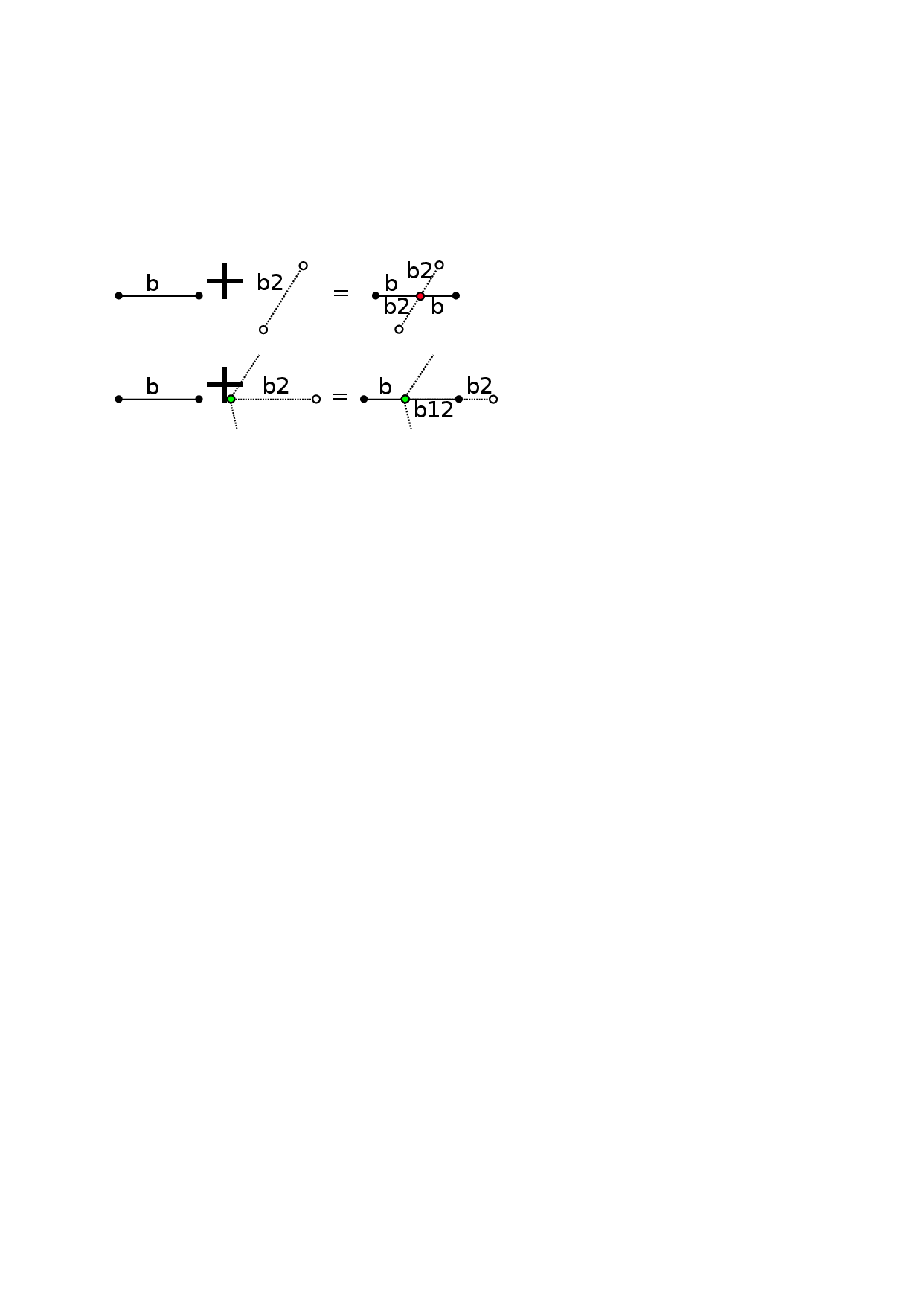}
\end{center}\caption{New assignements of weights for the \red{addition} of two balanced geodesic graphs.}\label{fig:sum w}
\end{figure}

Finally, the zero element of the vector space is the class of any geodesic graph whose weights are all zero. Let us observe that, for every (fixed) geodesic graph $\mathcal{G}_0$, the subset of $\BG_{(S,h)}$ composed of classes of balanced geodesic graphs having underlying geodesic graph $\mathcal{G}_0$ is a finite-dimensional vector subspace.

\subsection*{Construction of the map $\Phi$}

We are now ready to define the map from the space $\BG_{(S,h)}$ to the tangent space of Teichm\"uller space. We will actually define a deformation of the holonomy representation of $h$, thus providing a tangent vector to the character variety of $\pi_1(S)$. Namely, we will define a map:
$$\Phi:\BG_{(S,h)}\to H^1_{\Ad\rho}(\pi_1(S),\so(2,1))~,$$
where $[\rho]=\Hol([h])$. For this purpose, consider the universal cover  $\pi:\widetilde S\to S$ and fix a developing map 
$$\dev:(\widetilde S,\pi^*h)\to\Hyp^2$$
which is a global isometry, equivariant for the representation $\rho:\pi_1(S)\to \SO_0(2,1)$. 

Let us fix a balanced geodesic graph $(\mathcal{G},\mathbf w)$ and lift it to $(\widetilde{\mathcal{G}},\widetilde{\mathbf w})$ on  $(\widetilde S,\pi^*h)$. Let us also fix a basepoint $x_0\in\widetilde S$, which we assume does not lie in $\widetilde{\mathcal{G}}$. We define a cocycle $\tau:\pi_1(S)\to\so(2,1)$ in the following way. We say a path $\sigma:[0,1]\to\widetilde S$ is \emph{transverse} to $\widetilde{\mathcal{G}}$ if the intersection $\mathrm{Im}\sigma\cap\widetilde{\mathcal{G}}$ of the image of $\sigma$ and $\widetilde{\mathcal{G}}$ consists of a finite number of points, which are not vertices of $\widetilde{\mathcal{G}}$, and for every point $p=\sigma(t_0)\in\mathrm{Im}\sigma\cap\widetilde{\mathcal{G}}$ there exists $\epsilon>0$ such that $\sigma|_{(t_0-\epsilon)}$
and $\sigma|_{(t_0+\epsilon)}$ are contained in two different connected component of $U\setminus \widetilde{\mathcal{G}}$, where $U$ is a small neighborhood of $p$ in $\widetilde S$.

Now, pick $\gamma\in\pi_1(S)$ and let $\sigma:[0,1]\to\widetilde S$ be a path such that $\sigma(0)=x_0$ and $\sigma(1)=\gamma\cdot x_0$, transverse to $\widetilde{\mathcal{G}}$. Then we define the following element of $\so(2,1)$:
\begin{equation} \label{defi cocycle}
\tau(\gamma)=\sum_{p\in \mathrm{Im}\sigma\cap \widetilde e} \widetilde w_{\widetilde e} \mathfrak t(\dev(\widetilde e))~,
\end{equation}
where:
\begin{itemize}
\item The sum is over all points $p$ of intersection of the image of the path $\sigma$ with the lift $\widetilde{\mathcal{G}}$ of the balanced geodesic graph $\mathcal{G}$, see Figure \ref{fig:chemin};
\item \red{We orient $\tilde{e}$ on the left when it is crossed by $\sigma$. Hence $\dev(\widetilde e)$ is an oriented (subinterval of) a geodesic $\ell$ of $\Hyp^2$, and } recall from Section \ref{sec hyperboloid} that we denote $\mathfrak t(\ell)\in\so(2,1)$ the infinitesimal hyperbolic translation along $\ell$, using the orientation of $\ell$. Namely,
$$\mathfrak t(\ell)=\left.\frac{d}{dt}\right|_{t=0}T_t(\ell)$$
where $T_t(\ell)\in\SO_0(2,1)$ is the isometry of $\Hyp^2$ which preserves $\ell$ and translates every point of $\ell$ by a length
 $t$ according to the orientation of $\ell$. This is thus applied in Equation \eqref{defi cocycle} \red{with $\mathfrak t(\dev(\widetilde e))=\mathfrak t(\ell)\in \so(2,1)$}. 

\red{ Equivalently, $\mathfrak t(
\dev(\widetilde e))=\Lambda(x)$ where $\dev(\widetilde e)\subset x^\perp$, and $x$ is pointing in the same halfspace than the tangent vector of $\dev(\sigma)$ (seen as a space-like vector in $\R^{2,1}$) when it crosses  $\dev(\widetilde e)$.  } See Figure \ref{fig:chemin2}.
\item The coefficient $\widetilde w_{\widetilde e}$ in the sum equals the weight $w_e$ of the edge
$e=\pi(\widetilde e)$. 
\end{itemize}
We then define 
$$\Phi(\mathcal{G},\mathbf w)=[\tau]\in H^1_{\Ad\rho}(\pi_1(S),\so(2,1))~.$$
Under the identification between $H^1_{\Ad\rho}(\pi_1(S),\so(2,1))$ and $T_{[X]}\Teich(S)$, we thus define 
\begin{equation} \label{eq defi variation teich}
t_{(\mathcal{G},\mathbf w)}=d(\Hol\circ U)^{-1}\circ \Phi(\mathcal{G},\mathbf w)~.
\end{equation}

\begin{figure}
\begin{center}
\psfrag{x}{$x_0$}
\psfrag{gx}{$\gamma\cdot x_0$}
\psfrag{s}{$\sigma$}
\psfrag{p}{$p$}
\psfrag{el}{$\tilde{e}$}
\includegraphics[scale=1]{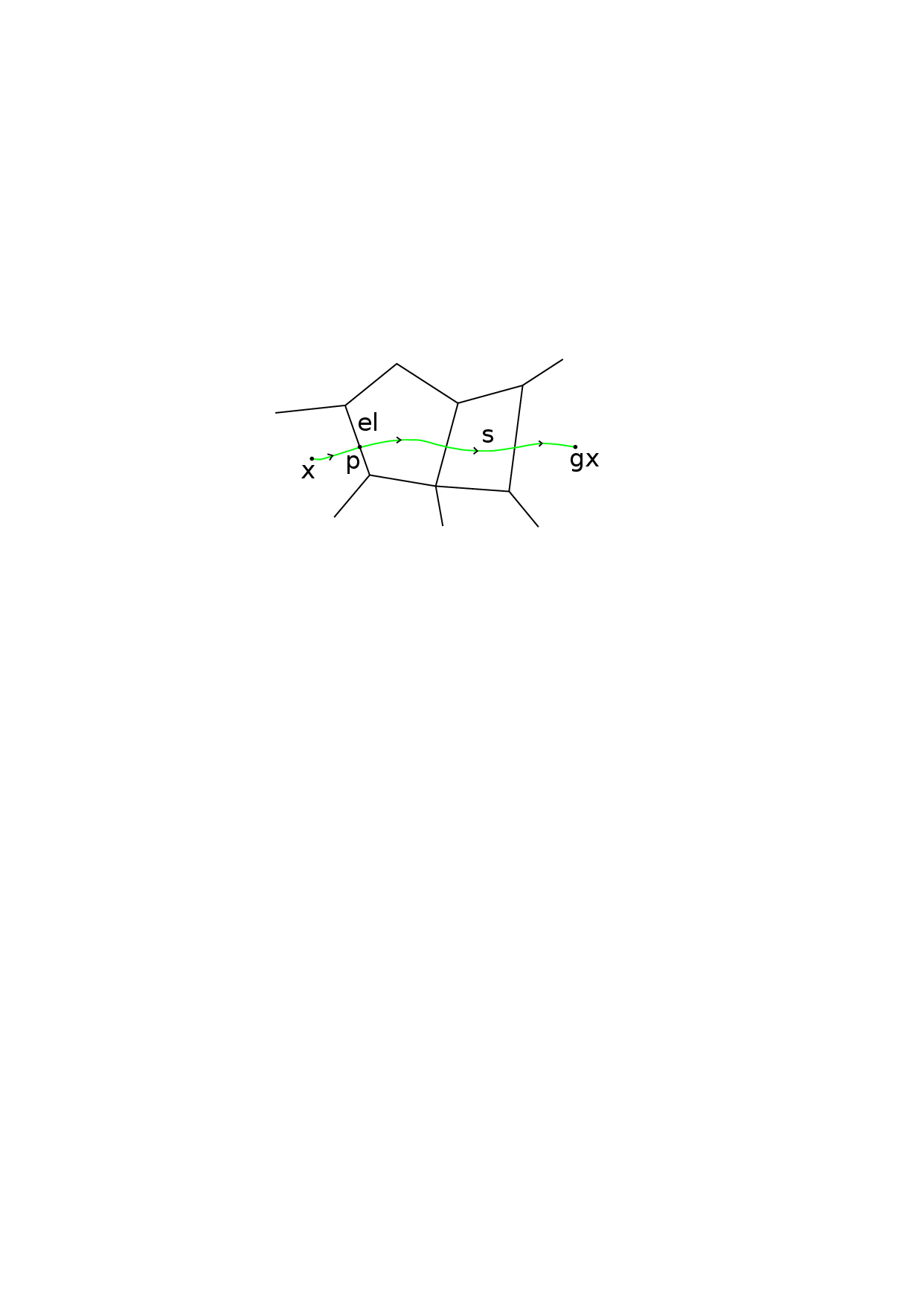}
\end{center}\caption{To define $\Phi$, we lift a generic closed loop representing $\gamma\in\pi_1(S)$ to a path $\sigma$ in $\tilde{S}$, and consider a sum over the points of intersection with lifted edges $\widetilde e$.}\label{fig:chemin}
\end{figure}

\begin{figure}
\begin{center}
\psfrag{si}{$\operatorname{dev}(\sigma)$}
\psfrag{el}{$\ell$}
\psfrag{dev}{$\operatorname{dev}(\tilde{e})$}
\includegraphics[scale=0.8]{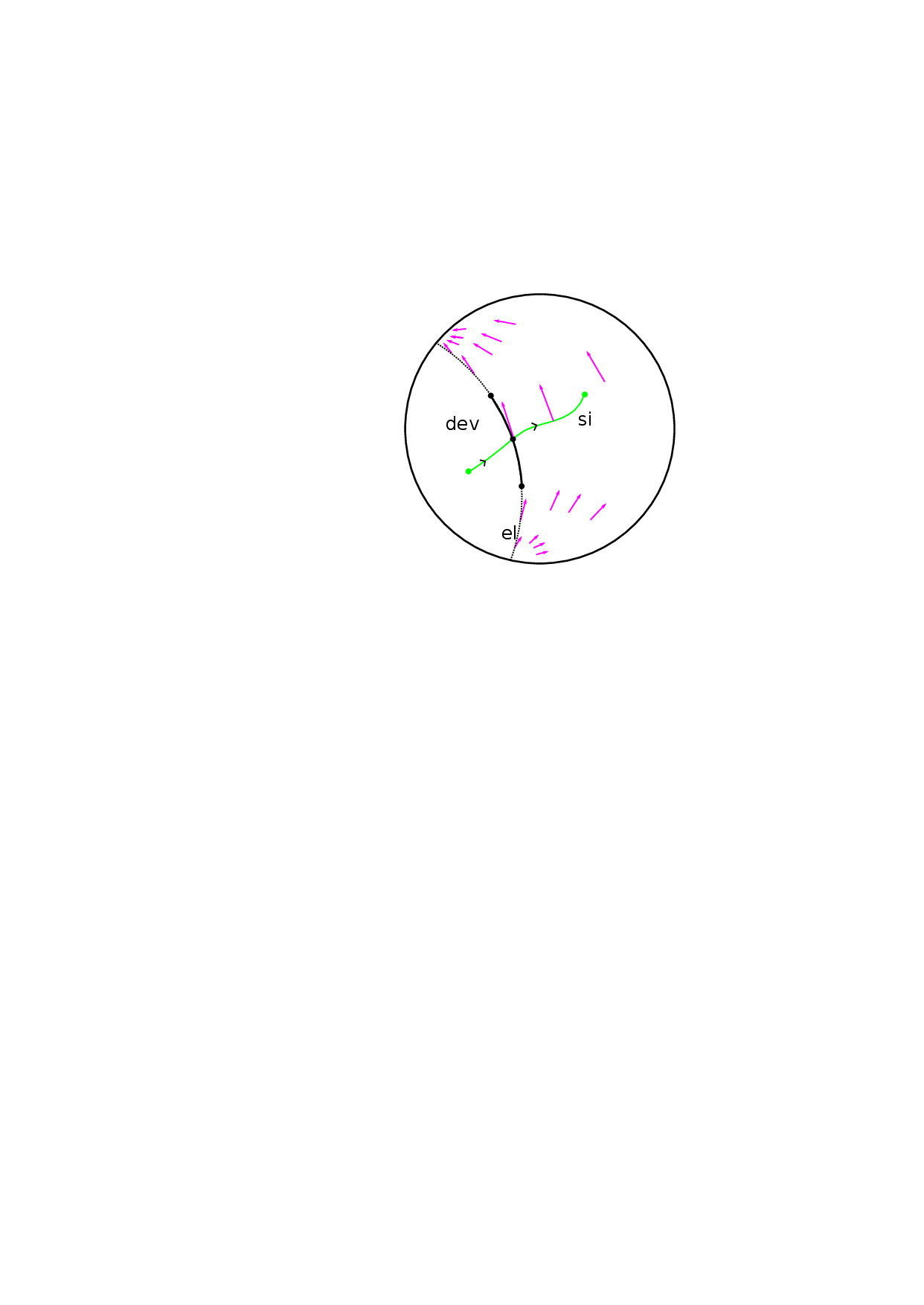}
\end{center}\caption{The image $\mathbb{H}^2$ (in the Poincar\'e disc model) of the path $\sigma$. The geodesic $\ell$ contains the development of the edge $\widetilde e$. The arrows represent some vectors of the Killing field $\mathfrak{t}(\operatorname{dev}(\tilde{e}))$.}\label{fig:chemin2}
\end{figure}

\subsection*{Well-definiteness of $\Phi$}

There is a number of points to be verified in order to check that the map $\Phi$ is well-defined. First, we need to show that the value $\tau(\gamma)$ does not depend on the chosen path $\sigma:[0,1]\to\widetilde S$, as long as $\sigma$ is transverse to $\widetilde{\mathcal{G}}$. (This is the same as choosing a representative of the closed loop in $S$, based at $\pi(x_0)$, which represents $\gamma$ and is transverse to $\mathcal{G}$.) 

In fact, there are three cases to consider.  See also Figure \ref{fig:3cases}.
\begin{enumerate}
\item If two representatives $\sigma$ and $\sigma'$ can 
be isotoped to one another by a family of paths $\sigma_t:[0,1]\to\widetilde S$ which is transverse for all $t$, then the value of $\tau$ is the same when computed with respect to $\sigma$ or $\sigma'$, since the quantities $\widetilde w_{\widetilde e}$ and  $\mathfrak t(\dev(\widetilde e))$ only depend on the edge $\widetilde e$, and not on the intersection point of $\sigma$ with $\widetilde e$. 
\item Suppose $\sigma$ and $\sigma'$ agree on the complement of a small neighborhood $U_p$ of a vertex $p\in\mathcal V$, and consider an isotopy for $\sigma$ and $\sigma'$ which crosses $p$ at some time $t_0\in(0,1)$ and is constant in $\widetilde S\setminus U_p$. Observe that the balance condition \eqref{eq:mink sum} is equivalent to the following condition:
\begin{equation} \label{eq balance condition lie algebra}
\sum_{p\in \widetilde e} \widetilde w_{\widetilde e} \mathfrak t(\dev(\widetilde e))=0~.
\end{equation}
Indeed, from Equation \eqref{eq:mink sum}, by lifting to the universal cover and rotating all vectors by $\pi/2$, one obtains 
$${\sum_{p\in \widetilde e} \widetilde w_{\widetilde e} y_{\widetilde e}}=0~,$$
where $y_{\widetilde e}$ is the unit vector orthogonal to the geodesic containing the edge $\widetilde e$, and can thus be interpreted as a unit spacelike vector in $\R^{2,1}$.
This shows that the result for $\tau$, defined in Equation \eqref{defi cocycle}, is the same if computed using $\sigma$ or $\sigma'$.
\item Finally, suppose $\sigma$ and $\sigma'$ only differ in such a way that one of the two paths intersects the same edge $\widetilde e$ at two consecutive points while the other does not. Then the contributions given by such consecutive intersections cancel out, hence the result is again the same. 
\end{enumerate}

%\begin{figure}[htbp]
%\centering
%\includegraphics[height=5cm]{WCM0215.pdf}
%\caption{The three cases in the proof of well-definiteness of $\Phi$.
%\label{fig:3cases}}
%\end{figure}

\begin{figure}
\begin{center}
\psfrag{s}{$\sigma$}
\psfrag{s2}{$\sigma'$}
\includegraphics[scale=1]{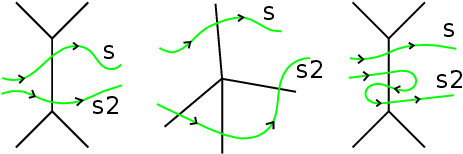}
\end{center}\caption{The three cases in the proof of well-definiteness of $\Phi$.}\label{fig:3cases}
\end{figure}

Since every two transverse paths connecting $x_0$ and $\gamma\cdot x_0$ can be deformed to one another by a sequence of moves of the three above types, we have shown that the definition of $\tau(\gamma)$ does not depend on the choice of the path $\sigma$ representing $\gamma$.

Second, we need to check $\tau\in Z^1_\rho(\pi_1(S),\so(2,1))$, that is, $\tau$ satisfies the cocycle condition of Equation \eqref{eq cocycle condition}. In fact, let $\sigma_1$ be a transverse path connecting $x_0$ and $\gamma\cdot x_0$ as in the definition, and similarly let $\sigma_2$ be a transverse path connecting $x_0$ and $\eta\cdot x_0$. To represent $\eta\gamma$, we can use the concatenation of $\sigma_1$ and $\eta\cdot \sigma_2$. Let $\sigma$ be such a concatenation of paths. Then we have
\begin{equation} \label{eq computation cocycle}
\tau(\gamma\eta)=\sum_{p\in \mathrm{Im}\sigma_1\cap \widetilde e} \widetilde w_{\widetilde e} \mathfrak t(\dev(\widetilde e))+\sum_{p\in \mathrm{Im}(\eta\cdot\sigma_2)\cap \widetilde e} \widetilde w_{\widetilde e} \mathfrak t(\dev(\widetilde e))~.
\end{equation}
Now, the first term in the summation is $\tau(\gamma)$, while the second term equals
$$\sum_{p\in \mathrm{Im}\sigma_2\cap \widetilde e} \widetilde w_{\widetilde e} \mathfrak t(\dev(\eta\cdot\widetilde e))=\sum_{p\in \mathrm{Im}\sigma_2\cap \widetilde e} \widetilde w_{\widetilde e} \mathfrak t(\rho(\eta)\dev(\widetilde e))=\sum_{p\in \mathrm{Im}\sigma_2\cap \widetilde e} \widetilde w_{\widetilde e} \Ad\rho(\eta)\cdot\mathfrak t(\dev(\widetilde e))~,$$
where we have applied the equivariance of $(\widetilde{\mathcal{G}},\widetilde{\mathbf w})$ and of $\dev$ for the holonomy representation $\rho$, and the property that the infinitesimal hyperbolic translation along the geodesic $\rho(\eta)\cdot \ell$ coincides with the infinitesimal translation along $\ell$ composed with $\Ad\rho(\eta)$. This shows that the second term in \eqref{eq computation cocycle} coincides with $\Ad\rho(\eta)\cdot \tau(\gamma)$ and thus concludes the claim.

It now only remains to show that the definition of $\Phi$ does not depend on the choice of the basepoint $x_0$. In fact, given another basepoint $x_0'$, let $\sigma'$ be a path connecting $x_0'$ and $\gamma\cdot x_0'$ and let $\tau'(\gamma)$ be obtained by applying the definition \eqref{defi cocycle} to $\sigma'$. Then one has
$$\tau'(\gamma)-\tau(\gamma)=\Ad\rho(\gamma)\cdot \tau_0-\tau_0~,$$
where $\tau_0$ is the quantity obtained by a summation, exactly as in \eqref{defi cocycle}, along a transverse path which connects $x_0$ and $x_1$. This shows that $[\tau]$ is well-defined in $H^1_{\Ad\rho}(\pi_1(S),\so(2,1))$.

\red{\begin{remark}
We remark that, in the case the balanced geodesic graph is a weighted multicurve, our construction recovers a tangent vector on Teichm\"uller space which is an infinitesimal left earthquake. In fact, the piecewise Killing vector field we construct is, on each stratum, nothing but the (infinitesimal) displacement with respect to the stratum containing $x_0$, which is fixed by construction. The fact that the definition of $\Phi$ does not depend on the choice of the basepoint nor on the path $\sigma$ essentially reflect the fact that for earthquakes on surfaces, relative displacements are well-defined, once one stipulates that all earthquakes are left.
\end{remark}}

\section{Geometric description}

In this section we will give two types of interpretations of the infinitesimal deformation $\Phi(\mathcal{G},\mathbf w)$ we have produced out of a balanced geodesic graph. The first interpretation should be interpreted as a motivation, since it generalizes infinitesimal twist along simple closed geodesics. The second concerns polyhedral surfaces in Minkowski space, and is then applied to show that $\Phi$ is surjective. Some details are omitted, since investigation of these viewpoints is beyond the scope of this paper and is thus left for future work.

\subsection*{Infinitesimal twist along simple closed geodesics} 
Let us consider a simple closed geodesic $c$ on $(S,h)$. As in Example \ref{example simple closed geo}, one can turn $c$ into a balanced geodesic graph $\mathcal{G}_c$, with constant weight $w$. If we put $w=1$, then $\Phi(\mathcal{G}_c,1)$ is the \emph{infinitesimal left twisting}, or \emph{infinitesimal left earthquake}, along the simple closed geodesic $c$, see Proposition~B.3  in \cite{bonschlfixed}. This is exactly the object which appears in Wolpert's formula in the articles \cite{wolpertformula} and \cite{wolpertelementary}.

\subsection*{Flippable tilings on hyperbolic surfaces}
More generally, let us assume $\mathcal{G}$ is a geodesic graph, which disconnects $S$ in convex geodesic faces. Then there are (differentiable) deformations $h_t$ of the hyperbolic metric $h$ so that $h_t$ contains a geodesic graph $\mathcal{G}_t$ which is a \emph{left flippable tiling} in the sense of \cite{flippable} (which is the reference to be consulted for more details). Roughly speaking, this means that the faces of $\mathcal{G}_t$ can be divided into \emph{black faces} and \emph{white faces}, and the black faces can be \emph{flipped} to obtain a new hyperbolic metric $h_t'$. The metric $h_t'$ is also endowed with a geodesic graph, which is a \emph{right flippable tiling}. As $t\to 0$, the metrics $h_t$ and $h_t'$ converge to the original metric $h$. The black faces of $\mathcal{G}_t$ and $\mathcal{G}_t'$ collapse continuously to the vertices of the original graph $\mathcal{G}$, while the white faces converge to the connected components of $S\setminus \mathcal{G}$. Moreover, the derivatives of the lengths of the edges of the black faces satisfy the balance condition, and thus define (positive) weights $\mathbf w$ such that 
 $(\mathcal G,\mathbf w)$ is a balanced geodesic graph. See Figure~\ref{fig:flip}.

A deformation of $(\mathcal G,\mathbf w)$ by left and right flippable tilings is not canonical, but by a direct computation one can show that their difference at first order is uniquely determined by $(\mathcal G,\mathbf w)$, and coincides with the quantity $\Phi(\mathcal{G},\mathbf w)$ we defined. Namely,
$$\Phi(\mathcal{G},\mathbf w)=\frac{1}{2}\left(\left.\frac{d}{dt}\right|_{t=0}[\rho_t]-\left.\frac{d}{dt}\right|_{t=0}[\rho'_t]\right)~,$$
where $\rho_t$ is the holonomy of  $h_t$ and $\rho'_t$ is the holonomy of $h'_t$.

\begin{figure}
\begin{center}
\psfrag{1b}{$1'$}\psfrag{1}{$1$}
\psfrag{2b}{$2'$}\psfrag{2}{$2$}
\psfrag{3b}{$3'$}\psfrag{3}{$3$}
\psfrag{4b}{$4'$}\psfrag{4}{$4$}
\psfrag{5b}{$5'$}\psfrag{5}{$5$}
\psfrag{6b}{$6'$}\psfrag{6}{$6$}
\psfrag{p}{$p$}
 \psfrag{a}{\textcolor{white}{$a$}}
 \psfrag{b}{\textcolor{white}{$b$}}
  \psfrag{c}{\textcolor{white}{$c$}}
   \psfrag{d}{\textcolor{white}{$d$}}
    \psfrag{e}{\textcolor{white}{$e$}}
   \psfrag{f}{\textcolor{white}{$f$}}
\includegraphics[scale=1]{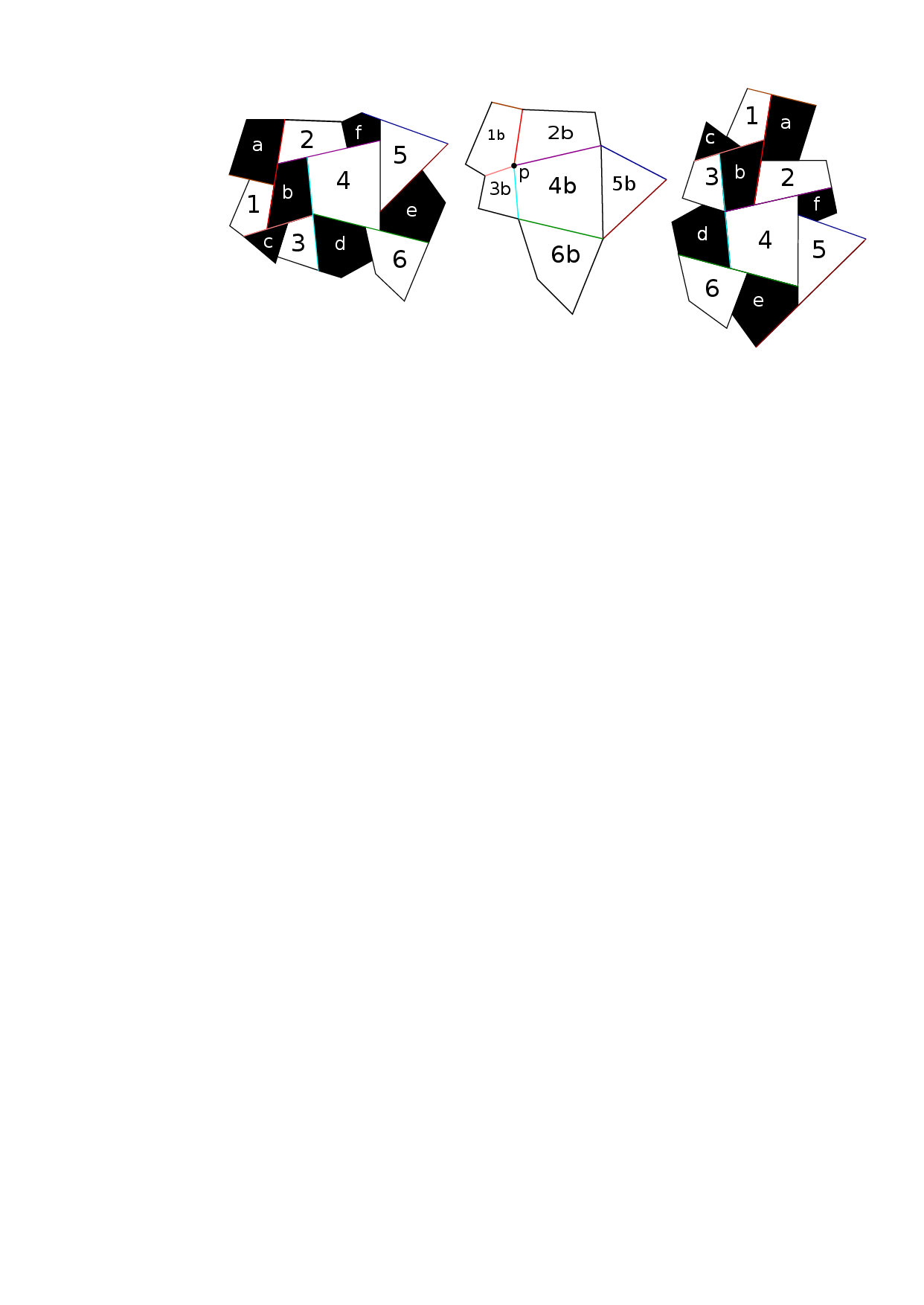}
\end{center}\caption{The left hand side of the picture is a piece of a left flippable tiling on a hyperbolic surface. The picture on the right hand side is a right flippable tiling instead, obtained from the left one by a flip. In particular, the corresponding polygons are isometric. The tilings are deformed such that the length of the edges of the black faces goes to zero and the resulting 
tiling of white faces gives the hyperbolic metric $(S,h)$. If the edges around $p$ are weighted by the derivatives of the lengths of the black faces, then the balance condition \eqref{eq balance intro}  is satisfied.}\label{fig:flip}
\end{figure}

\subsection*{Polyhedral surfaces in Minkowski space}
Let us now move to the second interpretation of the map $\Phi$. Let $\rho:\pi_1(S)\to\SO_0(2,1)$ be the holonomy representation of a hyperbolic surface $(S,h)$ and let $[\tau]\in H^1_{\Ad\rho}(\pi_1(S),\so(2,1))$. Using the isomorphism $\Lambda:\R^{2,1}\to\so(2,1)$ introduced at the end of Section \ref{sec hyperboloid}, and the equivariance of $\Lambda$ for the natural $\SO_0(2,1)$-actions, we have an isomorphism, which we still denote by $\Lambda$, between the spaces of cocycles:
$$\Lambda:{Z}^1_{\rho}(\pi_1(S),\R^{2,1})\to {Z}^1_{\Ad\rho}(\pi_1(S),\so(2,1))~.$$
Analogously, it induces an isomorphism between the spaces of coboundaries, and thus we have an isomorphism
$$[\Lambda]:H^1_{\rho}(\pi_1(S),\R^{2,1})\to H^1_{\Ad\rho}(\pi_1(S),\so(2,1))~.$$

Recall that the identity component of the isometry group of $\R^{2,1}$ is isomorphic to
$$\SO_0(2,1)\rtimes\R^{2,1}~,$$
and that, given a representation $\widehat\rho:\pi_1(S)\to \SO_0(2,1)\rtimes\R^{2,1}$ with linear part $\rho$, the translation part of such representation is a cocycle in $Z^1_{\rho}(\pi_1(S),\R^{2,1})$.
Now, let $[\tau]\in H^1_{\Ad\rho}(\pi_1(S),\so(2,1))$ and suppose $\Sigma$ is a
convex polyhedral surface in $\R^{2,1}$ with spacelike faces, invariant by the action of $\widehat\rho(\pi_1(S))$, where the linear part  is $\rho(\pi_1(S))$ and the translation part is $\Lambda(\tau)$. (This corresponds to the lift to $\R^{2,1}$ of a convex polyhedral surface in a maximal globally hyperbolic flat three-manifold, studied from this point of view in \cite{mess}.) It then turns out that the Gauss map $G:\Sigma\to\Hyp^2$ is a set-valued map equivariant with respect to the action of $\widehat\rho$ on $\Sigma$, and of $\rho$ on $\Hyp^2$. The image of the faces of $\Sigma$ are points in $\Hyp^2$, the image of edges of $\Sigma$ are geodesic edges. Therefore $G$ defines a geodesic graph $\mathcal{G}$ on $(S,h)$.

Moreover, we can define a set of weights $\mathbf w$ on $\mathcal{G}$. Given an edge $e$ of $\mathcal{G}$, let $\widetilde e$ be any lift of $e$ to $\Hyp^2$, and let $\widetilde e'=G^{-1}(\widetilde e)$ be the corresponding edge of the polyhedral surface $\Sigma$. Then we define $w_e$ as the length (for the Minkowski metric) of $\widetilde e'$, which clearly does not depend on the chosen lift. It can be easily seen that $(\mathcal{G},\mathbf w)$ is balanced, since the balance condition \eqref{eq:mink sum} is exactly equivalent to the condition that the faces of $\Sigma$ are bounded by a set of edges which ``close up'' in $\R^{2,1}$. 

Finally, let us remark that the balanced geodesic graph constructed in this way has positive weights on all edges. Moreover, the connected components of $S\setminus \mathcal G$ are convex geodesic polygons.

\begin{remark} \label{remark phi holonomy}
From the above construction, if $(\mathcal G,\mathbf w)$ is the balanced geodesic graph on $(S,h)$ associated to an invariant convex polyhedral surface $\Sigma$, then the translation part of the representation $\widehat \rho:\pi_1(S)\to \SO_0(2,1)\rtimes\R^{2,1}$ is precisely $[\Lambda]^{-1}\circ\Phi(\mathcal G,\mathbf w)$. {See also Section~4.4 in \cite{Fillastre} and the references therein}.
\end{remark}

This construction shows the abundance of examples of balanced geodesic graphs on a closed hyperbolic surface. In fact, 
it enables to show the following.

\begin{prop}
The map $\Phi:\BG_{(S,h)}\to H^1_{\Ad\rho}(\pi_1(S),\so(2,1))$ is surjective.
\end{prop}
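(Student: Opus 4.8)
The plan is to exploit the correspondence between balanced geodesic graphs and equivariant convex polyhedral surfaces in Minkowski space, which was just established, together with the theory of maximal globally hyperbolic flat spacetimes (Mess's parametrization). The key observation is that Remark \ref{remark phi holonomy} identifies $[\Lambda]^{-1}\circ\Phi(\mathcal G,\mathbf w)$ with the translation part of the holonomy $\widehat\rho$ of an invariant convex polyhedral surface $\Sigma$. Since $[\Lambda]$ is an isomorphism, surjectivity of $\Phi$ onto $H^1_{\Ad\rho}(\pi_1(S),\so(2,1))$ is equivalent to surjectivity of the map sending a balanced geodesic graph to the class in $H^1_\rho(\pi_1(S),\R^{2,1})$ given by the translation part of the associated polyhedral surface. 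So it suffices to show that \emph{every} cohomology class in $H^1_\rho(\pi_1(S),\R^{2,1})$ arises as the translation part of a representation $\widehat\rho$ admitting an invariant spacelike convex polyhedral surface.

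First I would recall that, by Mess's work (\cite{mess}) on flat maximal globally hyperbolic spacetimes with Fuchsian linear holonomy $\rho$, every cocycle $\tau\in Z^1_\rho(\pi_1(S),\R^{2,1})$ defines an affine deformation $\widehat\rho$ of $\rho$ which acts freely and properly discontinuously on a domain (the future of the initial singularity), yielding a maximal globally hyperbolic flat three-manifold $M$ homeomorphic to $S\times\R$. Thus the space of such manifolds is parametrized exactly by $H^1_\rho(\pi_1(S),\R^{2,1})$. The second step is to produce, inside each such $M$, a $\widehat\rho$-invariant convex \emph{polyhedral} spacelike surface. For this one can invoke the existence of convex Cauchy surfaces in these spacetimes; more precisely, one can take a $\widehat\rho$-invariant convex set in $\R^{2,1}$ (for instance the boundary of the convex hull of the limit set, or a level set of the cosmological time) and approximate or directly realize its boundary by an invariant polyhedral convex surface with spacelike faces. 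A clean way to guarantee polyhedrality is to intersect a suitable collection of half-spaces indexed by the orbit of finitely many spacelike support planes, chosen $\widehat\rho$-equivariantly.

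The main obstacle I anticipate is precisely this second step: ensuring that the invariant convex surface can be chosen to be genuinely \emph{polyhedral} (finitely many faces per fundamental domain) with all faces \emph{spacelike}, rather than merely convex. The spacelike condition is delicate because the Gauss map must land in $\Hyp^2$, so the support planes must be uniformly spacelike; controlling this uniformly under the $\pi_1(S)$-action requires the cocycle $\tau$ to define a spacetime in which the cosmological time function is proper, which is exactly the content of Mess's properness/properly-discontinuous statement. Polyhedrality can then be arranged by a standard cut-locus or Delaunay-type construction applied equivariantly, or alternatively by noting that the set of $\widehat\rho$-invariant convex polyhedral surfaces is dense enough to hit every cohomology class. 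Since the translation part of $\widehat\rho$ is exactly $\tau=\Lambda^{-1}(\Lambda(\tau))$ and depends only on the cohomology class, and since Mess's correspondence already exhibits \emph{all} of $H^1_\rho(\pi_1(S),\R^{2,1})$, the existence of even \emph{one} such polyhedral surface for each $\widehat\rho$ completes the argument via Remark \ref{remark phi holonomy}.

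I would conclude by assembling these pieces: given an arbitrary class in $H^1_{\Ad\rho}(\pi_1(S),\so(2,1))$, apply $[\Lambda]^{-1}$ to land in $H^1_\rho(\pi_1(S),\R^{2,1})$, realize it as the translation part of some $\widehat\rho$ by Mess, choose a $\widehat\rho$-invariant spacelike convex polyhedral surface $\Sigma$, form the associated balanced geodesic graph $(\mathcal G,\mathbf w)$ as in the construction preceding the statement, and invoke Remark \ref{remark phi holonomy} to conclude that $\Phi(\mathcal G,\mathbf w)$ equals the original class. This shows $\Phi$ is surjective.
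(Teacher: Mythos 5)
Your proposal follows essentially the same route as the paper: reduce via Remark \ref{remark phi holonomy} and the isomorphism $[\Lambda]$, invoke Mess's theorem to realize the given cocycle as the translation part of a representation $\widehat\rho:\pi_1(S)\to\SO_0(2,1)\rtimes\R^{2,1}$ acting freely and properly discontinuously on a future-convex domain $\mathcal D\subset\R^{2,1}$, produce a $\widehat\rho$-invariant spacelike convex polyhedral surface $\Sigma$, and read off the associated balanced geodesic graph. The one place where you diverge --- and which you rightly flag as the main obstacle --- is the construction of $\Sigma$. The paper's solution is simpler than any of the alternatives you sketch (equivariant approximation, intersections of half-spaces along orbits of support planes, Delaunay-type constructions): take finitely many points $p_1,\dots,p_n$ in a fundamental domain for the action of $\widehat\rho(\pi_1(S))$ on $\mathcal D$, and let $\Sigma$ be the boundary of the convex hull of the orbit $\bigcup_i\widehat\rho(\pi_1(S))\cdot p_i$; this is automatically a $\widehat\rho$-invariant spacelike convex polyhedral surface, with local finiteness coming from proper discontinuity of the action. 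Your half-space-intersection variant is a workable dual construction, but as written it is unjustified (local finiteness of its faces needs an argument just as much as spacelikeness does), and your fallback that invariant polyhedral surfaces are ``dense enough to hit every cohomology class'' does not address the real issue: since every $\widehat\rho$-invariant surface yields the same class, namely the translation part of $\widehat\rho$, density is irrelevant --- what is needed, as you yourself note at the end, is the existence of at least \emph{one} such polyhedral surface for each $\widehat\rho$, and that is exactly what the convex-hull-of-an-orbit construction delivers in one line.
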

\begin{proof}
Given any $[\tau]\in H^1_{\Ad\rho}(\pi_1(S),\so(2,1))$, by the work of Mess (\cite{mess}), the image of the representation $\widehat \rho:\pi_1(S)\to \SO_0(2,1)\rtimes\R^{2,1}$ with linear part $\rho$ and translation part $\Lambda^{-1}(\tau)$ acts freely and properly discontinously on a future-convex domain $\mathcal D$ in $\R^{2,1}$. Then take a finite number of points $p_i$ in a fundamental domain for the action of $\widehat \rho(\pi_1(S))$ on $\mathcal D$. The convex hull of the orbit $\cup_i \widehat \rho(\pi_1(S))\cdot p_i$ is a future-convex domain in $\R^{2,1}$ (actually, contained in $\mathcal D$), whose boundary is a $\widehat \rho(\pi_1(S))$-invariant spacelike convex polyhedral surface. Let $(\mathcal G,\mathbf w)$ be the balanced geodesic graph on $(S,h)$ associated to $\Sigma$, as constructed above. By Remark \ref{remark phi holonomy}, $\Phi(\mathcal G,\mathbf w)=[\tau]$. This concludes the proof.
\end{proof}

\section{De Rham cohomology} \label{sec de rham}

In this section we introduce the final tool needed to prove our main theorem, namely de Rham cohomology with values in a certain vector bundle over $(S,h)$. We then conclude the proof of the generalization of Wolpert's formula. 

\subsection*{Flat vector bundles}
Given a hyperbolic surface $(S,h)$, let $\rho:\pi_1(S)\to\SO_0(2,1)$ be its holonomy representation. Let us consider the flat vector bundle $F_\rho$ defined in the following way. Let $\widetilde S$ be the universal cover of $S$, and let 
$$F_\rho=(\widetilde S\times \so(2,1))/\pi_1(S)~,$$
where $\pi_1(S)$ is acting on $\widetilde S\times \so(2,1)$ by 
$$\gamma\cdot (p,x)=(\gamma\cdot p,\Ad\rho(\gamma)\cdot x)~.$$
Then $F_\rho$ is endowed with a flat vector bundle structure with base the original surface $(S,h)$. Hence we can consider de Rham cohomology with values in $F_\rho$, namely
$$H^1_{\rham}(S,F_\rho)=\frac{Z^1_{\rham}(S,F_\rho)}{B^1_{\rham}(S,F_\rho)}~,$$
where $Z^1_{\rham}(S,F_\rho)$ is the vector space of closed $F_\rho$-valued 1-forms and $B^1_{\rham}(S,F_\rho)$ the subspace of exact 1-forms.

It is well-known that there exists a natural vector space isomorphism
$$\Psi:H^1_{\rham}(S,F_\rho)\to H^1_{\Ad\rho}(\pi_1(S),\so(2,1))~.$$
Nevertheless, we review for convenience of the reader the construction of such vector space isomorphism, as it will be useful to prove our main theorem. First, observe that $F_\rho$-valued closed 1-forms $\alpha$ are in 1-1 correspondence with closed $\so(2,1)$-valued 1-forms $\widetilde \alpha$ on $\widetilde S$ satisfying the equivariance 
\begin{equation} \label{eq equivariance 1-form}
\widetilde \alpha\circ \gamma_*=\Ad\rho(\gamma)\circ \widetilde\alpha~.
\end{equation}
Hence, given a $F_\rho$-valued closed 1-form $\alpha$, we define a cocycle $\tau_\alpha$ by
$$\tau_\alpha(\gamma)=\int_{x_0}^{\gamma\cdot x_0}\widetilde\alpha:=\int_\sigma \widetilde\alpha~,$$
where $\sigma:[0,1]\to\widetilde S$ is a smooth path such that $\sigma(0)=x_0$ and $\sigma(1)=\gamma\cdot x_0$. Then we define
$$\Psi([\alpha]_\rham)=[\tau_\alpha]\in H^1_{\Ad\rho}(\pi_1(S),\so(2,1))~.$$
Since $\alpha$ is closed (hence also $\widetilde \alpha$) and $\widetilde S$ is simply connected, $\tau_\alpha(\gamma)$ does not depend on the choice of the path $\sigma$ with fixed endpoints. This authorizes us to use the notation $\int_{x_0}^{\gamma\cdot x_0}\widetilde\alpha$. By an argument analogous to that of Section \ref{sec balanced graph}, changing the basepoint $x_0$ results in changing $\tau_\alpha$ by a coboundary. 

Moreover, if $\alpha=df$ is exact, where $f$ is a global section of $F_\rho$, then $\widetilde\alpha=d\widetilde f$ and $\widetilde f$ satisfies 
$$\widetilde f\circ \gamma=\Ad\rho(\gamma)\cdot \widetilde f~.$$
 Hence in this case, by the fundamental theorem of calculus, 
 $$\tau_\alpha=\int_{x_0}^{\gamma\cdot x_0}d\widetilde f=\widetilde f(\gamma\cdot x_0)-\widetilde  f(x_0)=\Ad\rho(\gamma)\cdot \widetilde f(x_0)-\widetilde f(x_0)$$ is a coboundary as in \eqref{eq defi coboundary}. This shows that the map $\alpha\mapsto\tau_\alpha$ induces a well-defined map from $H^1_{\rham}(S,F_\rho)$ to $H^1_{\Ad\rho}(\pi_1(S),\so(2,1))$. 
 
As a warm-up, and for convenience of the reader, we also show that $\Psi$ is injective.
\begin{lemma}
The linear map $\Psi:H^1_{\rham}(S,F_\rho)\to H^1_{\Ad\rho}(\pi_1(S),\so(2,1))$ is injective.
\end{lemma}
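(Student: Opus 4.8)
The goal is to show that the map $\Psi\colon H^1_{\rham}(S,F_\rho)\to H^1_{\Ad\rho}(\pi_1(S),\so(2,1))$ has trivial kernel. Since $\Psi$ is linear, it suffices to show that if $\alpha$ is a closed $F_\rho$-valued $1$-form whose associated cocycle $\tau_\alpha$ is a coboundary, then $\alpha$ is exact, i.e. $\alpha=df$ for some global section $f$ of $F_\rho$. The whole argument is essentially the reverse of the construction of $\Psi$ recalled just above.

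\textbf{Key steps.}
First I would unwind the hypothesis $[\tau_\alpha]=0$. By the definition of coboundary in \eqref{eq defi coboundary}, this means there is a fixed $\tau_0\in\so(2,1)$ with $\tau_\alpha(\gamma)=\Ad\rho(\gamma)\cdot\tau_0-\tau_0$ for every $\gamma\in\pi_1(S)$. Working on the universal cover, lift $\alpha$ to the equivariant closed $\so(2,1)$-valued $1$-form $\widetilde\alpha$ on $\widetilde S$ satisfying \eqref{eq equivariance 1-form}. Since $\widetilde S$ is simply connected and $\widetilde\alpha$ is closed, I can integrate it: define
$$\widetilde f(x)=\tau_0+\int_{x_0}^{x}\widetilde\alpha~,$$
which is well-defined (independent of the path) and satisfies $d\widetilde f=\widetilde\alpha$ by the fundamental theorem of calculus. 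The second step is to verify that this primitive $\widetilde f$ is equivariant, i.e. $\widetilde f(\gamma\cdot x)=\Ad\rho(\gamma)\cdot\widetilde f(x)$, so that it descends to a global section $f$ of $F_\rho$. This is where the specific value $\tau_0$ in the definition of $\widetilde f$ is used: computing $\widetilde f(\gamma\cdot x)$ by splitting the integral from $x_0$ to $\gamma\cdot x_0$ and then from $\gamma\cdot x_0$ to $\gamma\cdot x$, the first piece contributes exactly $\tau_\alpha(\gamma)$, while the equivariance \eqref{eq equivariance 1-form} converts the second piece into $\Ad\rho(\gamma)\int_{x_0}^x\widetilde\alpha$. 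Using the coboundary relation $\tau_\alpha(\gamma)=\Ad\rho(\gamma)\cdot\tau_0-\tau_0$, these terms reassemble precisely into $\Ad\rho(\gamma)\cdot\widetilde f(x)$, giving the desired equivariance.

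\textbf{Conclusion and main obstacle.}
Once $\widetilde f$ is shown to be equivariant, it defines a global section $f$ of $F_\rho$ with $df=\alpha$, so $[\alpha]_\rham=0$ in $H^1_{\rham}(S,F_\rho)$, proving injectivity of $\Psi$. The computation is entirely formal and the only place requiring care is the equivariance check: one must choose the additive constant $\tau_0$ in the primitive correctly and handle the $\Ad\rho(\gamma)$-twisting under the integral via \eqref{eq equivariance 1-form}. I expect this bookkeeping — matching the coboundary relation against the splitting of the integral — to be the crux, though it is straightforward rather than genuinely difficult.
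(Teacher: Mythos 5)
Your proof is correct and follows essentially the same route as the paper: integrate the lifted form $\widetilde\alpha$ over the simply connected cover to get a primitive, and use the coboundary relation to verify that the suitably normalized primitive is $\Ad\rho$-equivariant, hence descends to a global section of $F_\rho$. The only difference is cosmetic — you absorb the constant $\tau_0$ directly into the primitive, whereas the paper first constructs an auxiliary global section $f_0$ with $\widetilde f_0(x_0)=\xi$ and writes $\alpha=df_0+df_1$; both arguments produce the same primitive in the end.
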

\begin{proof}
Suppose the image of $\alpha$ is trivial in the cohomology $H^1_{\Ad\rho}(\pi_1(S),\so(2,1))$. Since $\widetilde S$ is simply connected, there exists a function $g:\widetilde S\to\so(2,1)$ such that $\widetilde \alpha=dg$. Since $[\tau_\alpha]=0$, there exists $\xi\in\so(2,1)$ such that
 $$\tau_\alpha(\gamma)=g(\gamma\cdot x_0)-g(x_0)=\Ad\rho(\gamma)\cdot \xi-\xi~.$$
 
 Now choose a section $f_0\in H^0(S,F_\rho)$ such that its lift $\widetilde f_0:\widetilde S\to\so(2,1)$ at the point $x_0$ takes the value $\widetilde f_0(x_0)=\xi$. By this choice, $\tau_{df}(\gamma)=\Ad\rho(\gamma)\cdot \xi-\xi$, and thus 
 $\tau_{\alpha-df_0}=0$. This implies that
 \begin{equation} \label{eq vanishing integration}
 \int_{x_0}^{\gamma\cdot x_0}\widetilde\alpha-d\widetilde f_0:=\int_\sigma \widetilde\alpha-d\widetilde f_0=0~,
 \end{equation}
 where $\gamma\in\pi_1(S)$ and $\sigma$ is is any path connecting $x_0$ and $\gamma\cdot x_0$. 
 
 The proof will be concluded if we show that $\alpha-df_0$ is an exact 1-form. In fact, define $f_1:\widetilde S\to\so(2,1)$ by
 $$f_1(x)=\int_{x_0}^{x} \widetilde\alpha-d\widetilde f_0~.$$
 Clearly 
 $$\widetilde \alpha=d\widetilde f_0+d f_1~.$$ 
 If we show that $f_1$ induces a section of $F_\rho$, this will imply that $df_1$ induces an exact $F_\rho$-valued 1-form on $S$ and $\alpha=df_0+df_1$, and we will be done. In fact, we have
$$f_1(\gamma\cdot x)=\int^{\gamma\cdot x_0}_{x_0}\widetilde\alpha-d\widetilde f_0+\int^{\gamma\cdot x}_{\gamma\cdot x_0}\widetilde\alpha-d\widetilde f_0=\rho(\gamma) \int^{x}_{x_0}\widetilde\alpha-d\widetilde f_0=\rho(\gamma)\cdot f_1(x)~,$$
where we have used Equation \eqref{eq vanishing integration} and the equivariance of $\widetilde \alpha-d\widetilde f_0$. Hence $f_1$ comes from a section in $H^0(S,F_\rho)$ and this shows the claim.
\end{proof}

\subsection*{Reconstructing 1-forms from cocycles}
Although surjectivity of the map $H^1_{\rham}(S,F_\rho)\to H^1_{\Ad\rho}(\pi_1(S),\so(2,1))$
follows from injectivity and the fact that both vector spaces 
have dimension $3|\chi(S)|$, we will need an explicit construction for the inverse of this map.

Given $[\tau]\in H^1_{\Ad\rho}(\pi_1(S),\so(2,1))$, let $(\mathcal G,\mathbf w)$ be a balanced geodesic graph such that $\Phi(\mathcal G,\mathbf w)=[\tau]$. Let us denote $N_\epsilon(\mathcal G)$ the $\epsilon$-neighborhood of $\mathcal G$ for the hyperbolic metric $(S,h)$, and $B(p,\epsilon)$ the $h$-geodesic ball of radius $\epsilon$ centered at $p\in S$.  
Let us now pick $\epsilon>0$ such that $N_{3\epsilon}(\mathcal G)$ is embedded and the balls $B(p,3\epsilon)$ centered at the vertices $p\in\mathcal V$ are pairwise disjoint.

Now, define a $F_\rho$-valued closed 1-form $\alpha\in B^1_{\rham}(S,F_\rho)$ as follows. First, define 
$$\alpha=0\textrm{ on }S\setminus (\bigcup_{p\in\mathcal V}B(p,2\epsilon)\cup N_\epsilon(\mathcal G))~.$$
Second, for every edge $e$ of $\mathcal G$, let $\widetilde e$ be a lift of $e$ to $\widetilde S$, and put:
\begin{equation} \label{eq defi derham form}
\alpha=\left(w_e {\mathfrak t}(\dev(\widetilde e))\right) d(f\circ \delta)\textrm{ on }N_\epsilon(e)\setminus \bigcup_{p\in\mathcal V}B(p,2\epsilon)~,
\end{equation}
where:
\begin{itemize}
\item $\delta:N_\epsilon(\mathcal G)\to\R$ is the signed distance function $\delta(q)=d_{(S,h)}(q,\mathcal G)$;
\item $f:[-\epsilon,\epsilon]\to \R$ is a smooth function such that $f(\epsilon)=1/2$, $f(-\epsilon)=-1/2$, $f(-x)=-f(x)$, and the derivative $f'$ is compactly supported in $(-\epsilon,\epsilon)$; 
\item $\mathfrak t(\dev(\widetilde e))$ is the infinitesimal hyperbolic isometry along the geodesic containing $\dev(\widetilde e)$, where $\dev$ is the developing map of $(S,h)$ and $\widetilde e$ is a lift of the edge $e$. We choose $\mathfrak t(\dev(\widetilde e))$ to be the infinitesimal translation on the left, as seen from $\{\delta<0\}$ to $\{\delta>0\}$.
\end{itemize}
Observe that the definition of $\mathfrak t(\dev(\widetilde e))$ depends on the chosen orientation of 
$\dev(\widetilde e)$, which is implicitely induced by the choice of the sign of the distance function $\delta$. However, the quantity $\left(w_e {\mathfrak t}(\dev(\widetilde e))\right) d(f\circ \delta)$ is independent of such choice, since if one replaces $\delta$ by $-\delta$, ${\mathfrak t}(\dev(\widetilde e))$ takes the opposite sign, and also $d(f\circ \delta)$ changes sign since $f(-x)=-f(x)$.

Moreover, since $\dev(\gamma\cdot \widetilde e)=\rho(\gamma)\cdot \dev(\widetilde e)$, we have $\mathfrak t(\dev(\gamma\cdot \widetilde e))=\Ad\rho(\gamma)\cdot {\mathfrak{t}}(\dev(\widetilde e))$, and therefore $\alpha$ is well-defined as a 1-form with values in $F_\rho$ (which is only defined on the complement of the balls $B(p,2\epsilon)$ centered around vertices, for the moment). 

To conclude the definition, it remains to define $\alpha$ on the balls $B(p,2\epsilon)$ centered at vertices of $\mathcal V$. Before doing this, we give a remark.

\begin{remark} \label{remark integral derham}
For every path $\sigma:[0,1]\to \widetilde S$ transverse to $\widetilde{\mathcal G}$ (which we suppose has image in the complement of the $2\epsilon$-balls centered at vertices, for the moment), if $\sigma(0)$ and $\sigma(1)$ lie outside the $\epsilon$-neighborhood $N_\epsilon(\widetilde{\mathcal G})$, then the integral of $\widetilde\alpha$ over $\sigma$ is given by the sum of $w_e {\mathfrak{t}}(\dev(\widetilde e))$ over all edges $\widetilde e$ met by $\sigma$, and $e=\pi(\widetilde e)$. 
\end{remark}

Hence for every vertex $p$ of $\mathcal G,$
if $\gamma$ is any loop which generates the fundamental group of the annulus $B(p,3\epsilon)\setminus B(p,2\epsilon)$, $\int_\gamma \alpha=0$ as a consequence of the balance condition \eqref{eq balance condition lie algebra}.  
{It follows that $\alpha$ is exact on the annulus, hence it  can be smoothly extended  to a closed 1-form on the disc, and in turn on $S$.}
%Therefore we can smoothly extend $\alpha$ to a closed 1-form on $S$, since such integral is the only obstruction to extending to the disc a closed 1-form defined on the annulus. 
See Figure \ref{fig:extending inside balls}.

%\begin{figure}[htbp]
%\centering
%%\includegraphics[height=6cm]{WCM0216.pdf}
%\caption{The 1-form $\alpha$, first defined in the complement of the balls of radius $2\epsilon$ around the vertices, can be extended smoothly inside the balls.
%\label{fig:extending inside balls}}
%\end{figure}

\begin{figure}
\begin{center}
\psfrag{p}{$p$}
\psfrag{B}{$B(p,3\epsilon)$}
\psfrag{N}{$N_\epsilon(\mathcal G)$}
\includegraphics[scale=1]{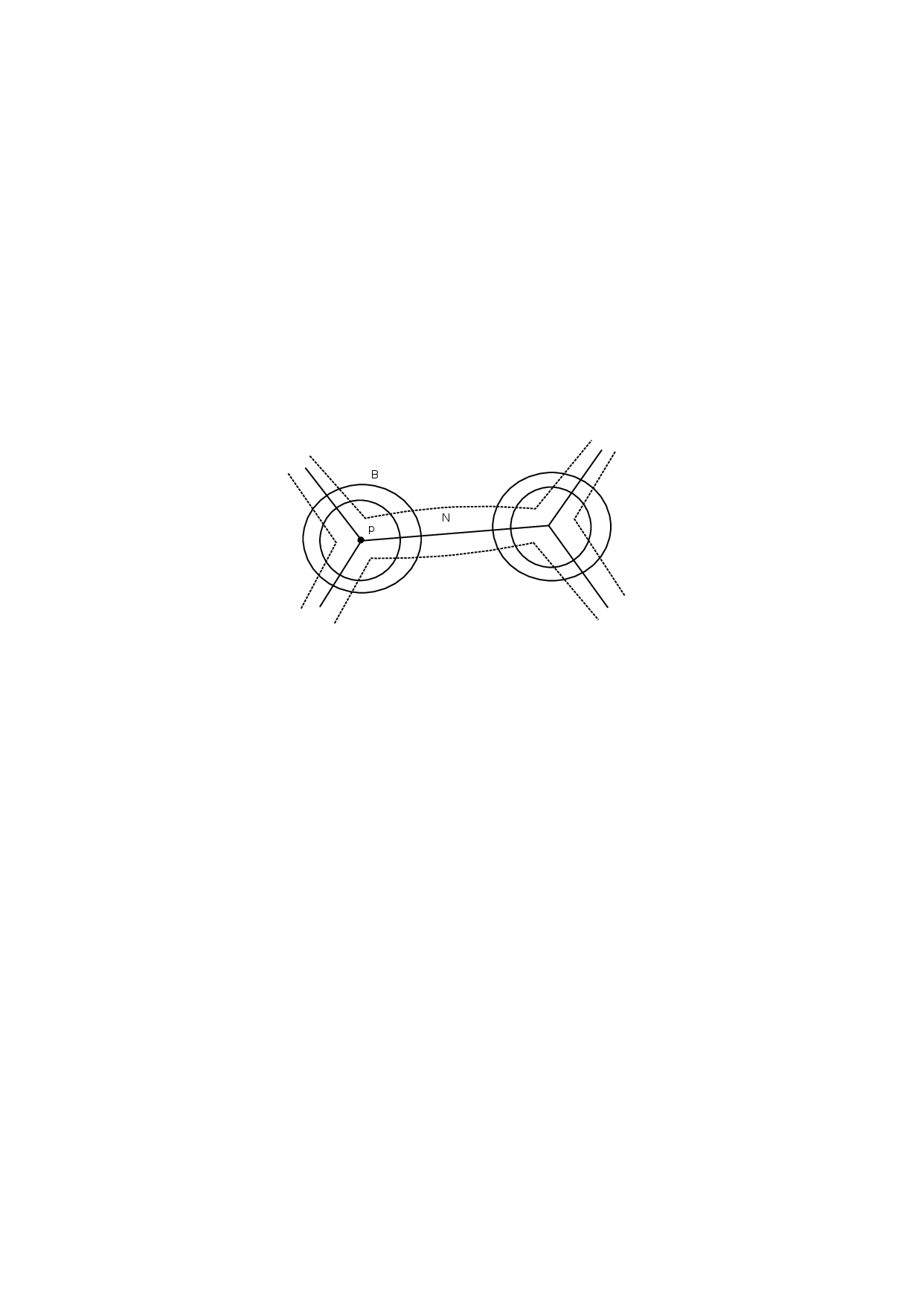}
\end{center}\caption{The 1-form $\alpha$, first defined in the complement of the balls of radius $2\epsilon$ around the vertices, can be extended smoothly inside the balls.
\label{fig:extending inside balls}}
\end{figure}

Using again Remark \ref{remark integral derham},
and comparing the definitions of the linear maps $\Psi:H^1_{\rham}(S,F_\rho)\to H^1_{\Ad\rho}(\pi_1(S),\so(2,1))$ and $\Phi:\BG_{(S,h)}\to H^1_{\Ad\rho}(\pi_1(S),\so(2,1))$, it follows that the 1-form $\alpha$ we have constructed satisfies $\Psi([\alpha]_\rham)=\Phi(\mathcal G,\mathbf w)=[\tau]$, namely, $\alpha$ realizes the cohomology class $[\tau]$ we started from.
Hence we provided a direct proof of the following:

\begin{lemma}
The linear map $\Psi:H^1_{\rham}(S,F_\rho)\to H^1_{\Ad\rho}(\pi_1(S),\so(2,1))$ is surjective.
\end{lemma}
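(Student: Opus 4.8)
The plan is to use the explicit construction already set up in the subsection \emph{Reconstructing 1-forms from cocycles}: for an arbitrary class $[\tau]\in H^1_{\Ad\rho}(\pi_1(S),\so(2,1))$ it produces a candidate closed $F_\rho$-valued $1$-form $\alpha$, and surjectivity amounts to checking that $\alpha$ is genuinely a cohomology class in $H^1_{\rham}(S,F_\rho)$ and that $\Psi([\alpha]_\rham)=[\tau]$. I would first record the soft argument — since $\Psi$ is injective by the previous lemma and both spaces have dimension $3|\chi(S)|$, surjectivity is automatic — but then give the explicit witness $\alpha$, which is what is actually needed later when computing the Goldman form on de Rham representatives in the proof of the main theorem.

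To build the witness I would invoke the surjectivity of $\Phi$ (the Proposition, via Mess's parametrization) to fix a balanced geodesic graph $(\mathcal G,\mathbf w)$ with $\Phi(\mathcal G,\mathbf w)=[\tau]$, and then take the form $\alpha$ of \eqref{eq defi derham form}, supported in $N_\epsilon(\mathcal G)$ together with the balls $B(p,2\epsilon)$ around the vertices. On each slab $N_\epsilon(e)\setminus\bigcup_{p}B(p,2\epsilon)$ the form is the product of the (bundle-constant) infinitesimal translation $w_e\,\mathfrak t(\dev(\widetilde e))$ with the exact scalar $1$-form $d(f\circ\delta)$, so it is closed there; I would verify that the equivariance $\mathfrak t(\dev(\gamma\cdot\widetilde e))=\Ad\rho(\gamma)\cdot\mathfrak t(\dev(\widetilde e))$ together with the antisymmetry $f(-x)=-f(x)$ makes $\alpha$ well defined as an $F_\rho$-valued form on the complement of the vertex balls.

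The main obstacle is extending $\alpha$ across the balls $B(p,2\epsilon)$ while keeping it closed, and this is exactly where the balance condition enters. For a loop generating $\pi_1(B(p,3\epsilon)\setminus B(p,2\epsilon))$ I would compute the period of $\alpha$: by Remark \ref{remark integral derham} it equals $\sum_{p\in\widetilde e}\widetilde w_{\widetilde e}\,\mathfrak t(\dev(\widetilde e))$, which vanishes precisely by the reformulated balance condition \eqref{eq balance condition lie algebra}. Hence $\alpha$ is exact on each annulus, so it extends to a closed form on the disc and therefore on all of $S$, giving a class $[\alpha]_\rham\in H^1_{\rham}(S,F_\rho)$.

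Finally I would identify this class. Using Remark \ref{remark integral derham} once more, for $\gamma\in\pi_1(S)$ the period $\tau_\alpha(\gamma)=\int_\sigma\widetilde\alpha$ along a transverse path $\sigma$ from $x_0$ to $\gamma\cdot x_0$ equals the weighted sum of $\mathfrak t(\dev(\widetilde e))$ over the edges crossed by $\sigma$, which is exactly the defining formula \eqref{defi cocycle} for $\tau(\gamma)$. Comparing the constructions of $\Psi$ and $\Phi$ then yields $\Psi([\alpha]_\rham)=\Phi(\mathcal G,\mathbf w)=[\tau]$, so $[\tau]$ lies in the image of $\Psi$ and surjectivity follows. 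I expect the only delicate point to be the smooth closed extension across the vertex balls; every other step is a direct bookkeeping of the two period formulas.
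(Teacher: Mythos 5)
Your proposal is correct and follows essentially the same route as the paper: it notes the soft dimension-count argument, then gives the explicit witness by choosing $(\mathcal G,\mathbf w)$ with $\Phi(\mathcal G,\mathbf w)=[\tau]$ via the surjectivity of $\Phi$, constructing the form $\alpha$ of \eqref{eq defi derham form}, using the balance condition \eqref{eq balance condition lie algebra} to kill the periods around the vertex balls and extend $\alpha$ to a closed form on all of $S$, and finally comparing periods via Remark \ref{remark integral derham} to conclude $\Psi([\alpha]_\rham)=[\tau]$. This is precisely the paper's argument, including the identification of the extension across the vertex balls as the only delicate step.
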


\subsection*{The Goldman form in de Rham cohomology}

Following \cite{Goldman}, we can now express the Goldman form in terms of de Rham cohomology. In fact, using the isomorphism $\Psi:H^1_{\rham}(S,F_\rho)\to H^1_{\Ad\rho}(\pi_1(S),\so(2,1))$ we have just introduced, Goldman showed that 
the cup product in the group cohomology $H^1_{\Ad\rho}(\pi_1(S),\so(2,1))$ corresponds to the wedge operation between 1-forms  --- where in both cases we use the Killing form as a pairing. That is,
\begin{equation} \label{eq goldman form in de rham}
\omega_{\mathrm G}(\Psi[\alpha],\Psi[\alpha'])=\int_S \kappa(\alpha\wedge\alpha')~,
\end{equation}
where $\kappa(\alpha\wedge\alpha')$ denotes the wedge product paired by using the Killing form of $\so(2,1)$: if $X,Y$ are smooth vector fields on $S$, then 
$\kappa(\alpha \wedge \alpha')(X,Y)=\kappa(\alpha(X),\alpha'(Y))-\kappa(\alpha(Y),\alpha'(X))$.

\section{Proof of the generalized Wolpert formula}

We are now ready to prove our main result, namely:

\begin{theorem*}
Let $(\mathcal G,\mathbf w)$ and $(\mathcal G',\mathbf w')$ be two balanced geodesic graphs on the hyperbolic surface $(S,h)$. Then
$$\omega_\weil(t_{(\mathcal G,\mathbf w)},t_{(\mathcal G',\mathbf w')})=\frac{1}{2}\sum_{p\in e\cap e'}w_ew_e'\cos \theta_{e,e'}~,$$
where $e$ and $e'$ are intersecting edges of $\mathcal G$ and $\mathcal G'$ and $\theta_{e,e'}$ is the angle of intersection
between $e$ and $e'$  according to the orientation of $S$.

If $\mathcal G$ and $\mathcal G'$ have some non-trasverse intersection, the points of intersection must be counted by perturbing one of the two graphs by a small isotopy, so as to make the intersection transverse.
\end{theorem*}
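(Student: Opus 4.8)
The plan is to push the entire computation into de Rham cohomology, where each balanced graph is represented by an explicit $F_\rho$-valued $1$-form concentrated in a thin neighbourhood of the graph, so that the Goldman pairing localizes to the finitely many crossings of $\mathcal G$ and $\mathcal G'$.

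First I would invoke Goldman's theorem \eqref{eq goldman theorem} to replace $\omega_\weil(t_{(\mathcal G,\mathbf w)},t_{(\mathcal G',\mathbf w')})$ by a multiple of the Goldman pairing of the classes $\Phi(\mathcal G,\mathbf w)$ and $\Phi(\mathcal G',\mathbf w')$, using that $d(\Hol\circ U)(t_{(\mathcal G,\mathbf w)})=\Phi(\mathcal G,\mathbf w)$ by \eqref{eq defi variation teich}. Then I would apply the de Rham description of the Goldman form \eqref{eq goldman form in de rham} together with the explicit closed $1$-forms $\alpha,\alpha'$ built in \eqref{eq defi derham form}, which by the ``Reconstructing $1$-forms'' construction satisfy $\Psi([\alpha]_\rham)=\Phi(\mathcal G,\mathbf w)$ and $\Psi([\alpha']_\rham)=\Phi(\mathcal G',\mathbf w')$. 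This reduces the theorem to evaluating the single integral $\int_S\kappa(\alpha\wedge\alpha')$.

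The decisive observation is that $\alpha$ vanishes outside $N_\epsilon(\mathcal G)\cup\bigcup_{p\in\mathcal V}B(p,2\epsilon)$ and $\alpha'$ outside the analogous neighbourhood of $\mathcal G'$, so $\kappa(\alpha\wedge\alpha')$ is supported in the small overlap of the two neighbourhoods. After a preliminary small isotopy of one graph I may assume the graphs meet transversally and away from each other's vertices, so that this support splits into disjoint disks, one per crossing $p\in e\cap e'$. On such a disk $\alpha=\mathfrak a\,d(f\circ\delta)$ and $\alpha'=\mathfrak a'\,d(f\circ\delta')$ with $\mathfrak a=w_e\,\mathfrak{t}(\dev(\widetilde e))=w_e\,\Lambda(x)$ and $\mathfrak a'=w_{e'}'\,\mathfrak{t}(\dev(\widetilde e'))=w_{e'}'\,\Lambda(x')$ constant, and $\delta,\delta'$ the signed distances to the two edges. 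Since the Lie-algebra factors are constant, $\kappa(\alpha\wedge\alpha')=\kappa(\mathfrak a,\mathfrak a')\,d(f\circ\delta)\wedge d(f\circ\delta')$, and the local integral factors as $\kappa(\mathfrak a,\mathfrak a')$ times $\int f'(\delta)f'(\delta')\,d\delta\wedge d\delta'$. Using \eqref{eq killing mink} I rewrite $\kappa(\mathfrak a,\mathfrak a')=2\,w_e w_{e'}'\langle x,x'\rangle$, and note that $x,x'$ are the unit normals to the two geodesics at the crossing, so $\langle x,x'\rangle=\cos\theta_{e,e'}$ because normals and tangents rotate together. The remaining scalar integral is computed with $(\delta,\delta')$ as local coordinates: the normalization $f(\pm\epsilon)=\pm 1/2$ gives a unit jump $\int f'\,d\delta=1$ across each edge, so the double integral equals $\pm 1$, the sign being precisely the orientation of the frame $(\grad\delta,\grad\delta')$ relative to $S$.

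Summing the local contributions over all crossings and feeding the result back through \eqref{eq goldman form in de rham} and \eqref{eq goldman theorem}, the numerical constants — the factor from \eqref{eq killing mink}, the unit jump of $f$, and Goldman's normalization \eqref{eq goldman theorem} — assemble into the coefficient $\tfrac12$ of \eqref{eq main formula}. I expect the two delicate points to be, first, pinning down the sign at each crossing so that it combines with $\cos\theta_{e,e'}$ into $+\cos\theta_{e,e'}$ for the oriented angle ``according to the orientation of $S$'', rather than an uncontrolled sign; and second, justifying the perturbation step, namely that making the intersection transverse alters neither $\int_S\kappa(\alpha\wedge\alpha')$ (a cohomological invariant, hence insensitive to the isotopy) nor the combinatorial count of crossings. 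The latter invariance is exactly a consequence of the balance condition \eqref{eq:mink sum}, equivalently \eqref{eq balance condition lie algebra}, in the same way it guaranteed well-definedness of $\Phi$; crossings at or near shared vertices are absorbed into this same step.
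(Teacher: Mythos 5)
Your proposal is correct and follows essentially the same route as the paper's proof: Goldman's theorem \eqref{eq goldman theorem} together with the de Rham description \eqref{eq goldman form in de rham} and the explicit forms \eqref{eq defi derham form} localize the pairing to the crossings, where the Killing-form identity \eqref{eq killing mink} and the unit jump of $f$ yield $2w_ew'_{e'}\cos\theta_{e,e'}$ per crossing, and the non-transverse case is handled by an isotopy that leaves the cohomology class of $\alpha$ unchanged. The only cosmetic difference is that you evaluate the local scalar integral $\int_{Q_p}d(f\circ\delta)\wedge d(f\circ\delta')$ by a change of variables to $(\delta,\delta')$ coordinates, whereas the paper computes it via Stokes' theorem on the boundary of $Q_p$.
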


\begin{proof}
Let $\alpha$ and $\alpha'$ be the closed 1-forms constructed in the previous subsection, so that 
$\Psi([\alpha]_\rham)=\Phi(\mathcal G,\mathbf w)$ and $\Psi([\alpha' ]_\rham)=\Phi(\mathcal G',\mathbf w')$. By Goldman theorem (recall Equation \eqref{eq goldman theorem}), the definition in Equation \eqref{eq defi variation teich} and Equation \eqref{eq goldman form in de rham}, we have:

$$\omega_\weil(t_{(\mathcal G,\mathbf w)},t_{(\mathcal G',\mathbf w')})=\frac{1}{4}\omega_{\mathrm G}(\Phi(\mathcal G,\mathbf w),\Phi(\mathcal G',\mathbf w'))=\frac{1}{4}\int_S \kappa(\alpha\wedge\alpha')~,$$

To compute the integral, first suppose that all points of intersection of $\mathcal G$ and $\mathcal G'$ are transverse and do not coincide with any of the vertices in $\mathcal V$ and $\mathcal V'$.  Replace the $\epsilon$ in the construction of $\alpha$ and $\alpha'$ by a smaller $\epsilon$ if necessary, so that $N_\epsilon(\mathcal G)$ and $N_\epsilon(\mathcal G')$ intersect only in the complement of the vertices of $\mathcal G$ and $\mathcal G'$. Since $\alpha$ vanishes in the complement of $N_\epsilon(\mathcal G)$, and $\alpha'$ vanishes in the complement of $N_\epsilon(\mathcal G')$, we have
$$\omega_\weil(t_{(\mathcal G,\mathbf w)},t_{(\mathcal G',\mathbf w')})=\frac{1}{4}\int_{N_\epsilon(\mathcal G)\cap N_\epsilon(\mathcal G')} \kappa(\alpha\wedge\alpha')~.$$
Now, for every point of intersection $p$ of edges $e$ of $\mathcal G$ and $e'$ of $\mathcal G'$, let $Q_p$ be the connected component of $N_\epsilon(\mathcal G)\cap N_\epsilon(\mathcal G')$ containing $p$. See Figure \ref{fig:components intersection}.

%\begin{figure}[htbp]
%\centering
%%\includegraphics[height=5cm]{WCM0217.pdf}
%\caption{The regions $Q_p$ in the intersection of the tubular neighborhoods of $\mathcal G$ and$\mathcal G'$, which are used in the proof.}
%\label{fig:components intersection}
%\end{figure}

\begin{figure}
\begin{center}
\psfrag{p}{$p$}
\psfrag{N2}{$N_\epsilon(\mathcal{G}')$}
\psfrag{N}{$N_\epsilon(\mathcal G)$}
\psfrag{Q}{$Q_p$}
\includegraphics[scale=1]{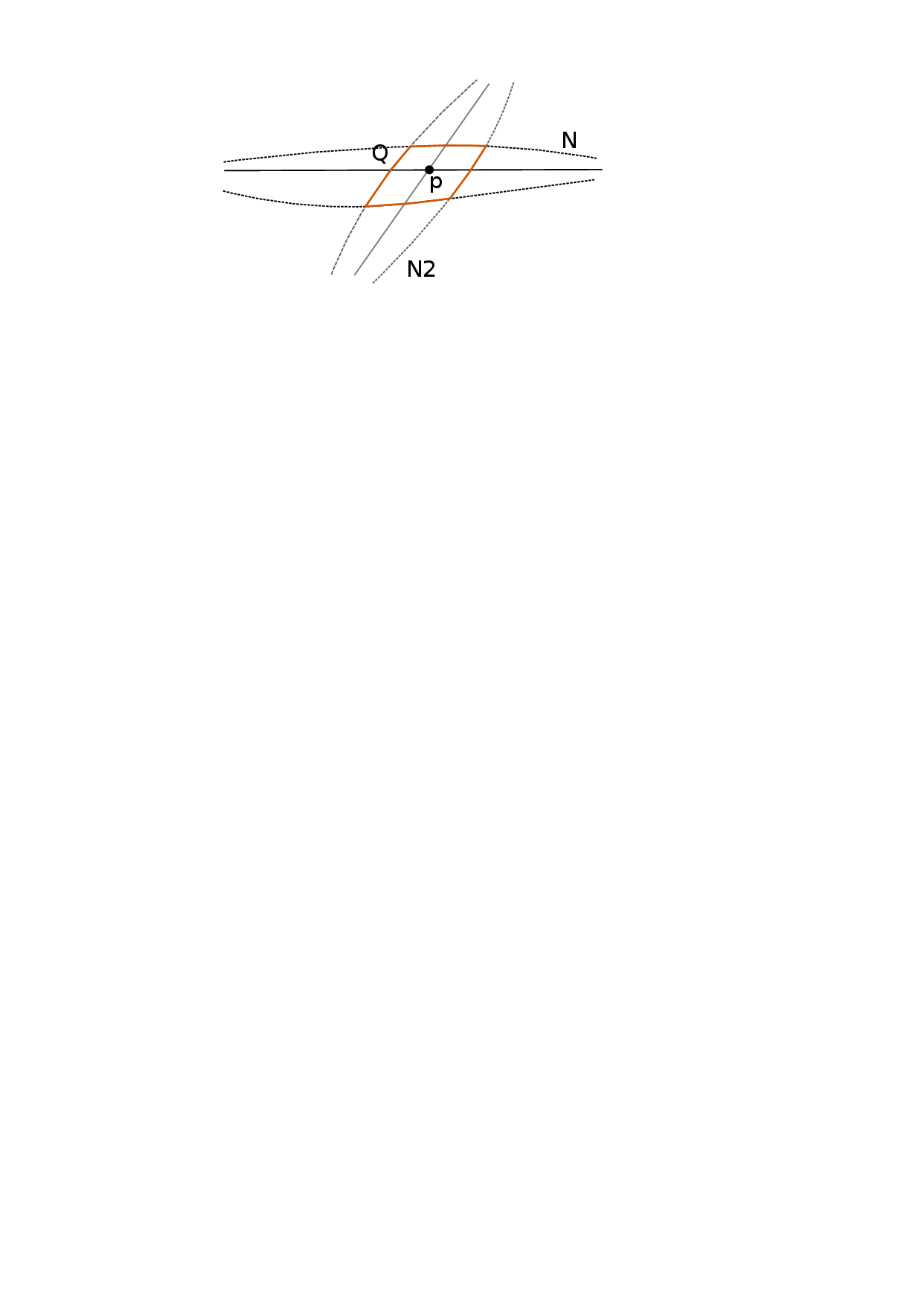}
\caption{The regions $Q_p$ in the intersection of the tubular neighborhoods of $\mathcal G$ and $\mathcal{G}'$, which are used in the proof.}
\label{fig:components intersection}
\end{center}
\end{figure}

Hence
$$\omega_\weil(t_{(\mathcal G,\mathbf w)},t_{(\mathcal G',\mathbf w')})=\frac{1}{4}\sum_{p\in e\cap e'}\int_{Q_p} \kappa(\alpha\wedge\alpha')~.$$
Let us now analyze each of the regions $Q_p$. Recall that $\alpha=\left(w_e {\mathfrak t}(\dev(\widetilde e))\right) d(f\circ \delta)$ inside $Q_p$, from the definition in Equation \eqref{eq defi derham form}. Analogously, let us write $\alpha=\left(w'_{e'} {\mathfrak t}(\dev(\widetilde e'))\right) d(f\circ \delta')$. Observe moreover that, if $x$ and $x'$ are the unit orthogonal vectors in $\R^{2,1}$ to the geodesics containing ${\mathfrak t}(\dev(\widetilde e))$ and ${\mathfrak t}(\dev(\widetilde e'))$ respectively (where their direction is induced by the orientation), then  by Equation \eqref{eq killing mink},
$$\kappa({\mathfrak t}(\dev(\widetilde e)),{\mathfrak t}(\dev(\widetilde e')))=2\langle x,x'\rangle=2\cos\theta_{e,e'}~.$$

Hence we have
$$\int_{Q_p}\kappa(\alpha\wedge\alpha')=2w_e w'_{e'}\cos\theta_{e,e'}\int_{Q_p}d(f\circ\delta)\wedge d(f\circ\delta')~.$$
We claim that $\int_{Q_p}d(f\circ\delta)\wedge d(f\circ\delta')=1$, which will thus conclude the proof that 
$$\int_{Q_p}\kappa(\alpha\wedge\alpha')=2w_e w'_{e'}\cos\theta_{e,e'}$$
and therefore 
$$\omega_\weil(t_{(\mathcal G,\mathbf w)},t_{(\mathcal G',\mathbf w')})=\frac{1}{2}\sum_{p\in e\cap e'}w_ew_{e'}\cos \theta_{e,e'}~.$$
To show the claim, by Stokes Theorem we have:
\begin{align*}
\int_{Q_p}d(f\circ\delta)\wedge d(f\circ\delta')=&
\int_{Q_p}d((f\circ\delta) d(f\circ\delta'))=\int_{\partial Q_p}(f\circ\delta) d(f\circ\delta')~.
\end{align*}
The region $Q_p$ has four smooth boundary components, namely:
$$(\partial Q_p)_{\pm}=\overline Q_p\cap \left\{f\circ \delta=\pm\frac{1}{2}\right\}$$
and
$$(\partial Q_p)'_{\pm}=\overline Q_p\cap \left\{f\circ \delta'=\pm\frac{1}{2}\right\}~.$$
Since $f'\circ \delta'$ is constant on $(\partial Q_p)'_{+}$ and $(\partial Q_p)'_{-}$, these two components do not contribute to the integral. On the other hand, let $q_2$ and $q_1$ be the two endpoints of $(\partial Q_p)_{+}$. By taking the orientation into account,
$$\int_{(\partial Q_p)_+}(f\circ\delta) d(f\circ\delta')=\frac{1}{2}\int_{(\partial Q_p)_+} d(f\circ\delta')=\frac{1}{2}(f\circ \delta'(q_2)-f\circ \delta'(q_1))=\frac{1}{2}~.$$
Similarly, we obtain 
$$\int_{(\partial Q_p)_-}(f\circ\delta) d(f\circ\delta')=\frac{1}{2}~,$$
since $f\circ \delta=-1/2$ on $(\partial Q_p)_-$ but the component $(\partial Q_p)_-$ is endowed with the opposite orientation from the orientation of $Q_p$, and therefore the two minus signs cancel out. This therefore shows $\int_{Q_p}d(f\circ\delta)\wedge d(f\circ\delta')=1$ as claimed, and thus concludes the proof under the hypothesis of transversality.

When the intersection of $\mathcal G$ and $\mathcal G'$ is not transverse, or contains some of the vertices (hence $\mathcal V\cap\mathcal V'\neq \emptyset$), then we can perturb $\mathcal G$ by an isotopy $h_t:S\to S$ (with $t\in[0,1]$ and $h_0=\mathrm{id}$), so as to make $h_1(\mathcal G)$ transverse to $\mathcal G'$ and to ensure that there are no vertices in common between $h_1(\mathcal G)$ and $\mathcal G'$, see Figures \ref{fig:perturbation1} and \ref{fig:perturbation2}. Moreover, we can also assume that the intersection of $h_1(\mathcal G)$ with an $\epsilon$-neighborhood of $\mathcal G'$ is geodesic. Since $\alpha$ and $h_1^*\alpha$ are in the same de Rham cohomology class, we can repeat the same computation using $h_1^*\alpha$ and $\alpha'$. Hence the result is the same as before, namely
$$\omega_\weil(t_{(\mathcal G,\mathbf w)},t_{(\mathcal G',\mathbf w')})=\frac{1}{2}\sum_{p\in h_1(e)\cap e'}w_ew_e'\cos \theta_{e,e'}~,$$
where $p$ are the intersection points of $h_1(\mathcal G)$ and $\mathcal G'$, belonging to edges $h_1(e)$ and $e'$, and the angle $\theta_{e,e'}$ is the angle between the geodesic edges $e$ and $e'$ of the original balanced geodesic graphs.
\end{proof}

%\begin{figure}[htbp]
%\centering
%%\includegraphics[height=4cm]{WCM0218.pdf}
%\caption{Perturbation by a small isotopy, when $\mathcal G$ and $\mathcal G'$ share a common vertex.}
%\label{fig:perturbation1}
%\end{figure}

\begin{figure}
\begin{center}
\includegraphics[scale=1]{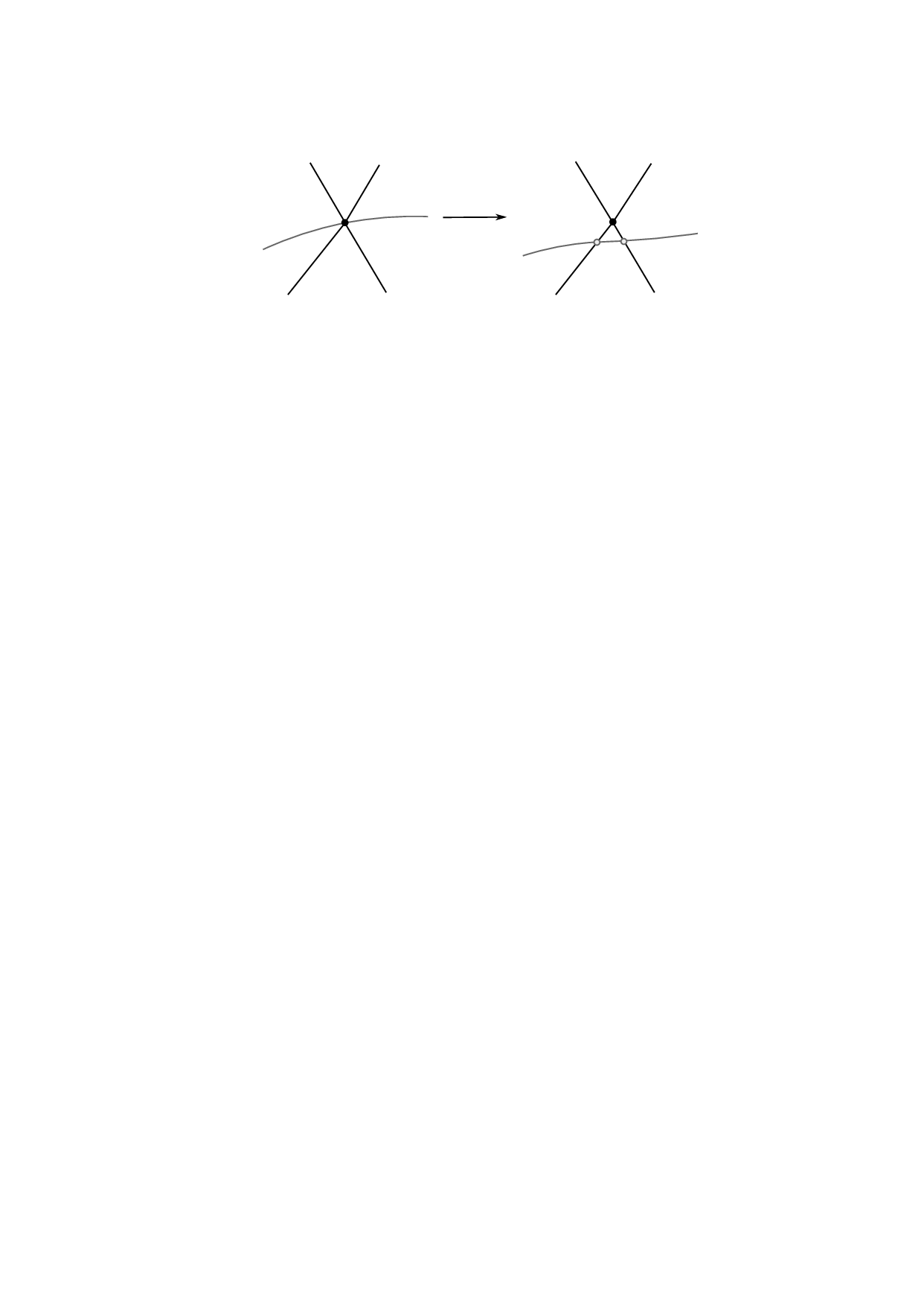}
\caption{Perturbation by a small isotopy, when $\mathcal G$ and $\mathcal{G}'$ share a common vertex.
\label{fig:perturbation1}}
\end{center}
\end{figure}

\begin{figure}
\begin{center}
\includegraphics[scale=1]{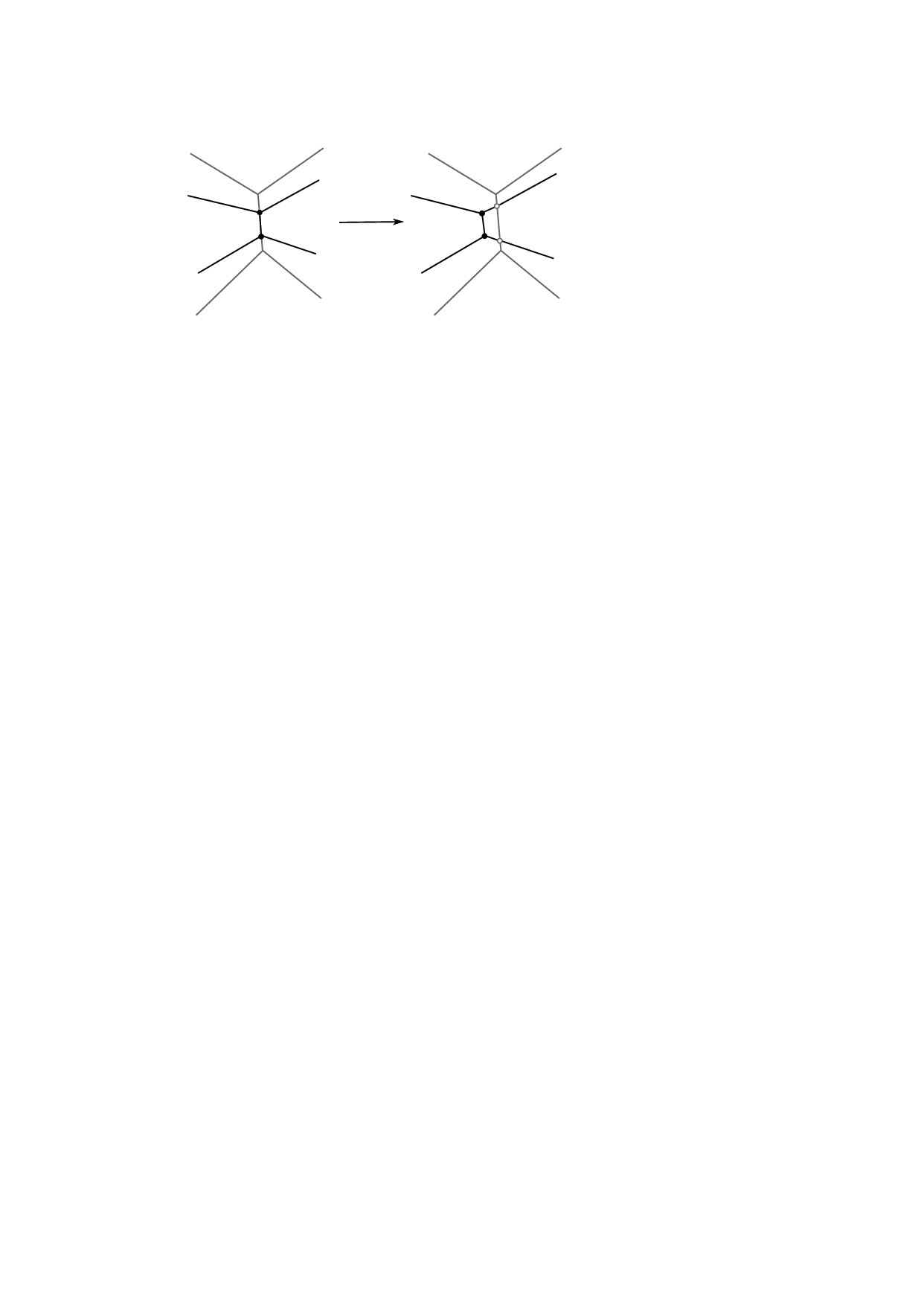}
\caption{Perturbation by a small isotopy, when $\mathcal G$ and $\mathcal G'$ share an edge.}
\label{fig:perturbation2}
\end{center}
\end{figure}

%\begin{figure}[htbp]
%\centering
%%\includegraphics[height=4cm]{WCM0219.pdf}
%\caption{Perturbation by a small isotopy, when $\mathcal G$ and $\mathcal G'$ share an edge.}
%\label{fig:perturbation2}
%\end{figure}

As we said, if $\mathcal G$ and $\mathcal G'$ are not transverse, or intersect on vertices, the only caveat in applying the formula \eqref{eq main formula} is to slightly perturb (topologically) one of the two graphs and use the perturbed graph to count intersection points. 
It follows from the Theorem that the result will not change if one chooses two different isotopies to perturb $\mathcal G$, which therefore result in different edge intersections. 
Let us remark that this is also a consequence of the balance condition \eqref{eq:mink sum}.

\bibliographystyle{alpha}
\bibliographystyle{ieeetr}
\bibliography{bs-bibliography}

\end{document}